\newtheorem{remark}[theorem]{Remark}
\colorlet{linkequation}{red}
\newcommand*{\SavedEqref}{}
\let\SavedEqref\eqref
\renewcommand*{\eqref}[1]{%
  \begingroup
    \hypersetup{
      linkcolor=linkequation,
      linkbordercolor=linkequation,
    }%
    \SavedEqref{#1}%
  \endgroup
}
\title{Neumann-Neumann Waveform Relaxation Algorithm in Multiple subdomains
for Hyperbolic Problems in 1D and 2D}
\author{Bankim C. Mandal \thanks{Department of Mathematical Sciences, Michigan Technological University, USA ({\tt bmandal@mtu.edu}).}
}
\begin{document}

\maketitle

\begin{abstract}
We present a Waveform Relaxation (WR) version of the Neumann-Neumann 
algorithm for the wave equation in space-time.
The method is based on a non-overlapping spatial domain decomposition,
and the iteration involves subdomain solves in space-time with corresponding
interface condition, followed by a correction step. Using a Fourier-Laplace
transform argument, for a particular relaxation parameter, we prove
convergence of the algorithm in a finite number of steps for finite
time intervals. The number of steps depends on the size of the subdomains
and the time window length on which the algorithm is employed. We
illustrate the performance of the algorithm with numerical results,
followed by a comparison with classical and optimized Schwarz WR
methods.
\end{abstract}

\begin{keywords}
Neumann-Neumann, Waveform Relaxation, Wave equation, Domain Decomposition.
\end{keywords}

\section{Introduction}
We formulate a new variant of Waveform Relaxation (WR) methods based on the Neumann-Neumann
algorithm to solve hyperbolic problems in parallel computer, and present convergence results for the method. The Neumann-Neumann algorithm was introduced for solving elliptic problems by Bourgat et
al. \cite{BouGT}, see also \cite{RoTal} and \cite{TalRoV}. The
iteration involves solving the subdomain problems using Dirichlet interface conditions
in the first step, followed by a correction step involving Neumann interface conditions.
The convergence behavior of the algorithm is now well
understood for elliptic problems, see for example the book \cite{TosWid}.

To solve time-dependent problems in parallel, the following three possible 
classes of domain decomposition techniques exist: 
\begin{itemize}
\item this approach consists of discretizing
the problem uniformly in time with an implicit scheme to obtain a
sequence of elliptic problems, which are then solved by DD methods. For this kind of technique, we refer to \cite{Cai1,Cai2}.
One disadvantage of this approach is that, uniform time step across the whole
domain need to be enforced, which is very restrictive for problems with variable
coefficients or multiple time scales. Also this method is expensive for parallel computation, since one needs to exchange information at each time step of the discretization.

\item in this approach the equation is
first discretized in space, which is called the method of lines, and
then one applies a waveform relaxation algorithm to solve the large system
of ordinary differential equations (ODEs) obtained from the space-discretization
process. Multigrid dynamic iteration \cite{LubOs,JanVand} and multi-splitting
algorithms \cite{JelPo} are some particular examples of this approach.

\item in contrast to the two
classical techniques above, there exist space-time domain decomposition methods,
formulated at the continuous level. Here, instead of discretizing in time
or in space, one decomposes the original spatial domain into smaller
subdomains and considers each subproblem as posed in both space
and time; then the subproblems are solved iteratively communicating
information at the interfaces between subdomains. This permits the use
of different numerical methods in different subdomains. At each iteration, one solves
the space-time subproblem over the entire time interval of interest, before communicating interface data across subdomains. Thus one saves
communication time while computing in parallel computer. For this approach, see \cite{GanStu,GilKel,Gand1,GH1,Martin} for parabolic problems, and \cite{GHN,GH2,GHN2} for hyperbolic problems.  
\end{itemize} 
In this article, we focus on the WR-type algorithm as it allows different discretizations in different space-time subdomains. WR methods have their origin in the work of Picard \cite{Picard} and Lindel\"of
\cite{Lind} in the late 19th century. Lelarasmee, Ruehli and Sangiovanni-Vincentelli \cite{LelRue} rediscovered WR as a parallel method for the solution of ODEs.

In a different viewpoint, the WR-type methods can be seen as an extension of DD methods for elliptic PDEs. The
systematic extension of the classical Schwarz method to time-dependent
parabolic problems was started in
\cite{GanStu,GilKel}; later optimized SWR methods have been introduced to achieve faster convergence or convergence with no overlap, see \cite{GH1} for parabolic problems, and
\cite{GHN} for hyperbolic problems. Recently, we extended the
substructuring methods, namely the Dirichlet-Neumann and Neumann-Neumann methods, to space-time problems; for
parabolic problems see \cite{GKM1,Mandal,Kwok,GKM2,BankThes}, and for hyperbolic problems see \cite{GKM3,GKM2,BankThes}. We analyzed for the heat
equation to prove that on finite time intervals, the Dirichlet-Neumann Waveform
Relaxation (DNWR) and the Neumann-Neumann Waveform Relaxation (NNWR)
methods converge superlinearly for an optimal choice of the relaxation
parameter. On the contrary for the wave equation, these methods with a two-subdomains decomposition converge in a finite no of steps, see \cite{GKM2}. In this paper, we propose the NNWR method with many subdomains decomposition for hyperbolic problems and analyze the method
for the Wave equation in one and two space dimensions. We analyze
the method in the continuous setting to ensure the understanding
of the asymptotic behavior of the methods in the case of fine grids.

We consider the following hyperbolic PDE on a bounded domain $\Omega\subset\mathbb{R}^{d},0<t<T$,
$d=1,2,3$, with a smooth boundary as our guiding example,
\begin{equation}
\begin{array}{rcll}
\frac{\partial^{2}u}{\partial t^{2}}-c^{2}(\boldsymbol{x})\Delta u & = & f(\boldsymbol{x},t), & \boldsymbol{x}\in\Omega,0<t<T,\\
u(\boldsymbol{x},0) & = & u_{0}(\boldsymbol{x}), & \boldsymbol{x}\in\Omega,\\
u_{t}(\boldsymbol{x},0) & = & v_{0}(\boldsymbol{x}), & \boldsymbol{x}\in\Omega,\\
u(\boldsymbol{x},t) & = & g(\boldsymbol{x},t), & \boldsymbol{x}\in\partial\Omega,0<t<T,
\end{array}\label{modelwave}
\end{equation}
with $c(\boldsymbol{x})$ being a positive function.

We introduce in Section \ref{Section2} the non-overlapping NNWR algorithm with
multiple subdomains for the model problem \eqref{modelwave}, and then 
analyze its convergence for the one dimensional wave equation. 
In Section \ref{Section3} we present convergence result of the NNWR
for multiple subdomains in 2D. Our convergence analysis shows
that both the NNWR algorithm converge in a finite no of steps for
finite time intervals, $T<\infty$. Finally we present numerical results in Section \ref{Section4}, which illustrate our
analysis.

\section{NNWR for multiple subdomains}\label{Section2}

In this section we define the Neumann-Neumann Waveform Relaxation (NNWR)
method with many subdomains for the model problem \eqref{modelwave} on the space-time
domain $\Omega\times(0,T)$ with Dirichlet data given on $\partial\Omega$.
This can be treated as a generalization of the NNWR algorithm for
two subdomains, for which see \cite{GKM3}. 
The method starts with a non-overlapping spatial domain
decomposition, and the iteration involves subdomain solves in space
time with corresponding interface condition, followed by a correction
step.

\subsection{NNWR algorithm}

Suppose the spatial domain $\Omega$ is partitioned into $N$ non-overlapping subdomains $\{\Omega_{i}$,
$1\leq i\leq N\}$, as illustrated in the left panel of Figure \ref{NumFig3}.
For $i=1,\ldots,N$ set $\Gamma_{i}:=\partial\Omega_{i}\setminus\partial\Omega$,
$\Lambda_{i}:=\{j\in\{1,\ldots,N\}:\Gamma_{i}\cap\Gamma_{j}\,\mbox{has nonzero measure}\}$
and $\Gamma_{ij}:=\partial\Omega_{i}\cap\partial\Omega_{j}$, so that
the interface of $\Omega_{i}$ can be rewritten as $\Gamma_{i}=\bigcup_{j\in\Lambda_{i}}\Gamma_{ij}$.
We denote by $u_{i}$ the restriction
of the solution $u$ of \eqref{modelwave} to $\Omega_{i}$ and by $\boldsymbol{n}_{ij}$ the unit outward normal for $\Omega_{i}$
on the interface $\Gamma_{ij}$. The NNWR algorithm for the model
problem \eqref{modelwave} starts with an initial guess $w_{ij}^{0}(\boldsymbol{x},t)$
along the interfaces $\Gamma_{ij}\times(0,T)$, $j\in\Lambda_{i}$,
$i=1,\ldots,N$, and then performs the following two-step iteration:
one first solves Dirichlet subproblems on each $\Omega_{i}$ in parallel, 

\begin{equation}
\begin{array}{rcll}
\partial_{tt}u_{i}^{k}-c^{2}(\boldsymbol{x})\Delta u_{i}^{k} & = & f, & \mbox{in \ensuremath{\Omega_{i}}},\\
u_{i}^{k}(\boldsymbol{x},0) & = & u_{0}(\boldsymbol{x}), & \mbox{in \ensuremath{\Omega_{i}}},\\
\partial_{t}u_{i}^{k}(\boldsymbol{x},0) & = & v_{0}(\boldsymbol{x}), & \mbox{in \ensuremath{\Omega_{i}}},\\
u_{i}^{k} & = & g, & \mbox{on \ensuremath{\partial\Omega_{i}\setminus\Gamma_{i}}},\\
u_{i}^{k} & = & w_{ij}^{k-1}, & \mbox{on \ensuremath{\Gamma_{ij},j\in\Lambda_{i}}}.
\end{array}\label{NNwavemultGen1}
\end{equation}
One then solves Neumann subproblems on all subdomains,
\begin{equation}
\begin{array}{rcll}
\partial_{tt}\varphi_{i}^{k}-c^{2}(\boldsymbol{x})\Delta \varphi_{i}^{k} & = & 0, & \mbox{in \ensuremath{\Omega_{i}}},\\
\varphi_{i}^{k}(\boldsymbol{x},0) & = & 0, & \mbox{in \ensuremath{\Omega_{i}}},\\
\partial_{t}\varphi_{i}^{k}(\boldsymbol{x},0) & = & 0, & \mbox{in \ensuremath{\Omega_{i}}},\\
\varphi_{i}^{k} & = & 0, & \mbox{on \ensuremath{\partial\Omega_{i}\setminus\Gamma_{i}}},\\
\partial_{\boldsymbol{n}_{ij}}\varphi_{i}^{k} & = & \partial_{\boldsymbol{n}_{ij}}u_{i}^{k}+\partial_{\boldsymbol{n}_{ji}}u_{j}^{k}, & \mbox{on \ensuremath{\Gamma_{ij},j\in\Lambda_{i}}}.
\end{array}\label{NNwavemultGen2}
\end{equation}
with the updating condition
\begin{equation}
w_{ij}^{k}(\boldsymbol{x},t)=w_{ij}^{k-1}(\boldsymbol{x},t)-\theta\left(\varphi_{i}^{k}\left|_{\Gamma_{ij}\times(0,T)}\right.+\varphi_{j}^{k}\left|_{\Gamma_{ij}\times(0,T)}\right.\right),\label{NNwavemultGen3}
\end{equation}
where $\theta\in(0,1]$ is a relaxation parameter.
\begin{figure}
  \centering
  \includegraphics[width=0.49\textwidth]{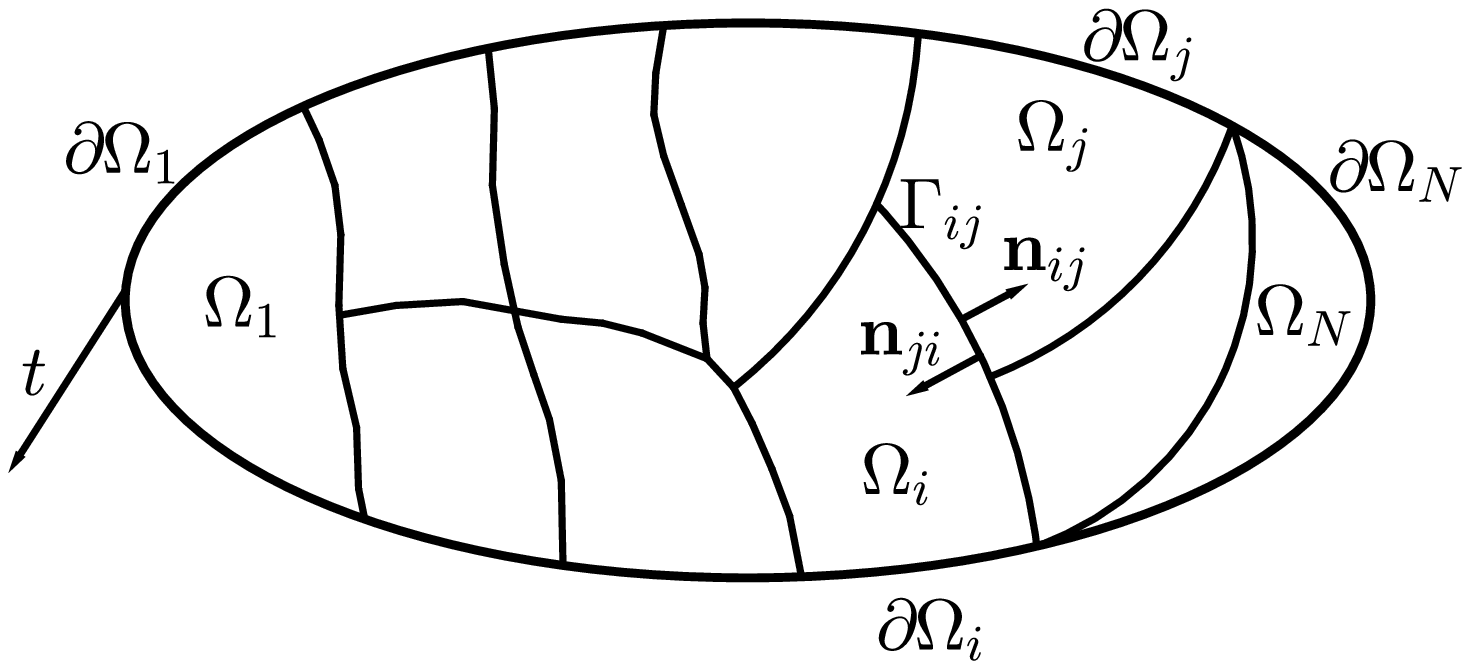}
  \includegraphics[width=0.49\textwidth]{Multidomain.mps}
  \caption{Splitting into many non-overlapping subdomains}
  \label{NumFig3}
\end{figure}

\subsection{Convergence analysis for 1D}

We prove our convergence result for the one dimensional
wave equation with constant speed, $c(x)=c$ on the domain $\Omega:=(0,L)$
with boundary conditions $u(0,t)=g_{0}(t)$ and $u(L,t)=g_{L}(t)$, which in turn become zeros as we consider the error equations, $f(x,t)=0,g_{0}(t)=g_{L}(t)=0=u_{0}(x)=v_{0}(x)$. We decompose $\Omega$ into non-overlapping subdomains $\Omega_{i}:=(x_{i-1},x_{i})$,
$i=1,\ldots,N$, as shown in the right panel of Figure \ref{NumFig3}, and define
the subdomain length $h_{i}:=x_{i}-x_{i-1}$, and $h_{\min}:=\min_{1\leq i\leq N}h_{i}$.
Our initial guess is denoted by $\left\{ w_{i}^{0}(t)\right\} _{i=1}^{N-1}$
on the interfaces $\left\{ x=x_{i}\right\} \times(0,T)$, and for
sake of consistency we denote $w_{0}^{k}(t)=w_{N}^{k}(t)=0$ for all $k$. We then obtain 
\begin{equation}
\begin{array}{rclrcl}
\partial_{tt}u_{i}^{k}-c^{2}\partial_{xx}u_{i}^{k} & = & 0,\qquad\textrm{in \ensuremath{\Omega_{i}}}, & \partial_{tt}\varphi_{i}^{k}-c^{2}\partial_{xx}\varphi_{i}^{k} & = & 0,\qquad\textrm{in \ensuremath{\Omega_{i}}},\\
u_{i}^{k}(x,0) & = & 0,\qquad\textrm{in \ensuremath{\Omega_{i}}}, & \varphi_{i}^{k}(x,0) & = & 0,\qquad\textrm{in \ensuremath{\Omega_{i}}},\\
\partial_{t}u_{i}^{k}(x,0) & = & 0,\qquad\textrm{in \ensuremath{\Omega_{i}}}, & \partial_{t}\varphi_{i}^{k}(x,0) & = & 0,\qquad\textrm{in \ensuremath{\Omega_{i}}},\\
u_{i}^{k}(x_{i-1},t) & = & w_{i-1}^{k-1}(t), & -\partial_{x}\varphi_{i}^{k}(x_{i-1},t) & = & (\partial_{x}u_{i-1}^{k}-\partial_{x}u_{i}^{k})(x_{i-1},t),\\
u_{i}^{k}(x_{i},t) & = & w_{i}^{k-1}(t), & \partial_{x}\varphi_{i}^{k}(x_{i},t) & = & (\partial_{x}u_{i}^{k}-\partial_{x}u_{i+1}^{k})(x_{i},t),
\end{array}\label{NNwavemult1}
\end{equation}
except for the first and last subdomains, where the Neumann conditions
in the Neumann step are replaced by homogeneous Dirichlet conditions
along the physical boundaries. The updated interface values for the
next step will be
\begin{equation}
w_{i}^{k}(t)=w_{i}^{k-1}(t)-\theta\left(\varphi_{i}^{k}(x_{i},t)+\varphi_{i+1}^{k}(x_{i},t)\right).\label{NNwavemult2}
\end{equation}
We start by applying the Laplace transform to the homogeneous Dirichlet
subproblems in \eqref{NNwavemult1}, and obtain 
\[
s^{2}\hat{u}_{i}-c^{2}\hat{u}_{i,xx}=0,\quad\hat{u}_{i}(x_{i-1},s)=\hat{w}_{i-1}(s),\quad\hat{u}_{i}(x_{i},s)=\hat{w}_{i}(s),
\]
for $i=2,...,N-1.$ Set $\gamma_{i}=\cosh\left(h_{i}s/c\right),\sigma_{i}=\sinh\left(h_{i}s/c\right).$
These subdomain problems have the solutions 
\[
\hat{u}_{i}(x,s)=\frac{1}{\sigma_{i}}\left(\hat{w}_{i}(s)\sinh\left((x-x_{i-1})s/c\right)+\hat{w}_{i-1}(s)\sinh\left((x_{i}-x)s/c\right)\right).
\]
Next we apply the Laplace transform to the Neumann subproblems in
\eqref{NNwavemult1} for subdomains not touching the physical boundary,
and obtain 
\[
\hat{\varphi}_{i}(x,s)=C_{i}(s)\cosh\left((x-x_{i-1})s/c\right)+D_{i}(s)\cosh\left((x_{i}-x)s/c\right),
\]
where 
\begin{eqnarray*}
C_{i} & = & \frac{1}{\sigma_{i}}\left(\hat{w}_{i}\left(\frac{\gamma_{i}}{\sigma_{i}}+\frac{\gamma_{i+1}}{\sigma_{i+1}}\right)-\frac{\hat{w}_{i-1}}{\sigma_{i}}-\frac{\hat{w}_{i+1}}{\sigma_{i+1}}\right),\\
D_{i} & = & \frac{1}{\sigma_{i}}\left(\hat{w}_{i-1}\left(\frac{\gamma_{i}}{\sigma_{i}}+\frac{\gamma_{i-1}}{\sigma_{i-1}}\right)-\frac{\hat{w}_{i-2}}{\sigma_{i-1}}-\frac{\hat{w}_{i}}{\sigma_{i}}\right).
\end{eqnarray*}
We therefore obtain for $i=2,...,N-2,$ at iteration $k$ 
\begin{eqnarray*}
\hat{w}_{i}^{k}(s) & = & \hat{w}_{i}^{k-1}(s)-\theta\left(\hat{\varphi}_{i}^{k}(x_{i},s)+\hat{\varphi}_{i+1}^{k}(x_{i},s)\right)\\
 & = & \hat{w}_{i}^{k-1}(s)-\theta\left(C_{i}\gamma_{i}+D_{i}+C_{i+1}+D_{i+1}\gamma_{i+1}\right).
\end{eqnarray*}
Using the identity $\gamma_{i}^{2}-1=\sigma_{i}^{2}$ and simplifying,
we get

\begin{multline}
\hat{w}_{i}^{k}={\displaystyle \hat{w}_{i}^{k-1}-\theta\left(\hat{w}_{i}^{k-1}\left(2+\frac{2\gamma_{i}\gamma_{i+1}}{\sigma_{i}\sigma_{i+1}}\right)+\frac{\hat{w}_{i+1}^{k-1}}{\sigma_{i+1}}\left(\frac{\gamma_{i+2}}{\sigma_{i+2}}-\frac{\gamma_{i}}{\sigma_{i}}\right)\right.}\\
\left.+\frac{\hat{w}_{i-1}^{k-1}}{\sigma_{i}}\left(\frac{\gamma_{i-1}}{\sigma_{i-1}}-\frac{\gamma_{i+1}}{\sigma_{i+1}}\right)-\frac{\hat{w}_{i+2}^{k-1}}{\sigma_{i+1}\sigma_{i+2}}-\frac{\hat{w}_{i-2}^{k-1}}{\sigma_{i}\sigma_{i-1}}\right).\label{eq:5.9}
\end{multline}
For $i=1$ and $i=N$, the Neumann conditions on the physical boundary
are replaced by homogeneous Dirichlet conditions $\varphi_{1}(0,t)=0$
and $\varphi_{N}(L,t)=0$, $t>0$. For these two subdomains, we obtain
as solution after a Laplace transform 
\begin{eqnarray*}
\hat{\varphi}_{1}(x,s) & = & \frac{1}{\gamma_{1}}\left(\hat{w}_{1}\left(\frac{\gamma_{1}}{\sigma_{1}}+\frac{\gamma_{2}}{\sigma_{2}}\right)-\frac{\hat{w}_{2}}{\sigma_{2}}\right)\sinh\left((x-x_{0})s/c\right),\\
\hat{\varphi}_{N}(x,s) & = & \frac{1}{\gamma_{N}}\left(\hat{w}_{N-1}\left(\frac{\gamma_{N-1}}{\sigma_{N-1}}+\frac{\gamma_{N}}{\sigma_{N}}\right)-\frac{\hat{w}_{N-2}}{\sigma_{N-1}}\right)\sinh\left((x_{N}-x)s/c\right),
\end{eqnarray*}
and thus the recurrence relations on the first interface is 
\begin{equation}
\hat{w}_{1}^{k}=\hat{w}_{1}^{k-1}-\theta\left(\hat{w}_{1}^{k-1}\left(2+\frac{\gamma_{1}\gamma_{2}}{\sigma_{1}\sigma_{2}}+\frac{\sigma_{1}\gamma_{2}}{\gamma_{1}\sigma_{2}}\right)+\frac{\hat{w}_{2}^{k-1}}{\sigma_{2}}\left(\frac{\gamma_{3}}{\sigma_{3}}-\frac{\sigma_{1}}{\gamma_{1}}\right)-\frac{\hat{w}_{3}^{k-1}}{\sigma_{2}\sigma_{3}}\right),\label{eq:5.10}
\end{equation}
and on the last interface, we obtain
\begin{multline}
\hat{w}_{N-1}^{k}={\displaystyle \hat{w}_{N-1}^{k-1}-\theta\left(\hat{w}_{N-1}^{k-1}\left(2+\frac{\gamma_{N-1}\gamma_{N}}{\sigma_{N-1}\sigma_{N}}+\frac{\sigma_{N}\gamma_{N-1}}{\gamma_{N}\sigma_{N-1}}\right)\right.}\\
{\displaystyle \left.+\frac{\hat{w}_{N-2}^{k-1}}{\sigma_{N-1}}\left(\frac{\gamma_{N-2}}{\sigma_{N-2}}-\frac{\sigma_{N}}{\gamma_{N}}\right)-\frac{\hat{w}_{N-3}^{k-1}}{\sigma_{N-1}\sigma_{N-2}}\right).}\label{eq:5.11}
\end{multline}
We have the following convergence result for NNWR in 1D:
\begin{theorem}[Convergence of NNWR for multiple subdomains]\label{NNwaveMulti}
Let $\theta=1/4$. Then the NNWR algorithm \eqref{NNwavemult1}-\eqref{NNwavemult2}
converges in $k+1$ iterations, if the time window length $T$ satisfies
$T/k\leq2h_{\min}/c$, $c$ being the wave speed. 
\end{theorem}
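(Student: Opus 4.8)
The plan is to rewrite the Laplace-transformed recurrences \eqref{eq:5.9}, \eqref{eq:5.10}, \eqref{eq:5.11} as a single linear iteration $\hat{\mathbf{w}}^{k}(s) = M(s)\,\hat{\mathbf{w}}^{k-1}(s)$ for the interface vector $\hat{\mathbf{w}}^{k} = (\hat{w}_1^k,\ldots,\hat{w}_{N-1}^k)^\top$, and to show that with $\theta = 1/4$ every entry of $M(s)$ is, in the time domain, a convolution operator whose kernel is supported on $[2h_{\min}/c,\infty)$. Iterating then pushes the support of the error out to $[2kh_{\min}/c,\infty)$, and the finite speed of propagation yields the stated count. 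The physical intuition is that one NN sweep propagates interface information by at least one round trip across the smallest subdomain, which takes time $2h_{\min}/c$.

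First I would substitute $\theta=1/4$ into the diagonal coefficient of $\hat{w}_i^{k-1}$ in \eqref{eq:5.9}, namely $\tfrac12 - \tfrac12\,\gamma_i\gamma_{i+1}/(\sigma_i\sigma_{i+1})$, and expand for $\mathrm{Re}(s)>0$ using $\gamma_i/\sigma_i = \coth(h_i s/c) = 1 + 2e^{-2h_i s/c} + 2e^{-4h_i s/c}+\cdots$ and $1/\sigma_i = \cosech(h_i s/c) = 2e^{-h_i s/c}+2e^{-3h_i s/c}+\cdots$. The crucial point is that $\theta=1/4$ forces the $s$-independent term to cancel: $\tfrac12 - \tfrac12\bigl(1 + 2e^{-2h_i s/c} + 2e^{-2h_{i+1}s/c}+\cdots\bigr) = -e^{-2h_i s/c} - e^{-2h_{i+1} s/c} - \cdots$, whose leading exponential has rate $\geq 2h_{\min}/c$. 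I would then treat the off-diagonal coefficients of $\hat{w}_{i\pm1}^{k-1}$ and $\hat{w}_{i\pm2}^{k-1}$ the same way: each is either a $\cosech$ factor times a difference of two $\coth$ terms (whose constant parts again cancel) or a product of two $\cosech$ factors, so each carries a factor $e^{-\alpha s}$ with $\alpha \geq 2h_{\min}/c$. The boundary recurrences \eqref{eq:5.10}, \eqref{eq:5.11} need the extra identity $\sigma_i\gamma_{i+1}/(\gamma_i\sigma_{i+1}) = \tanh(h_i s/c)\coth(h_{i+1}s/c)$ with $\tanh(z)=1-2e^{-2z}+\cdots$; since $\coth(h_1 s/c)\coth(h_2 s/c) + \tanh(h_1 s/c)\coth(h_2 s/c) = 2 + 4e^{-2h_2 s/c}+\cdots$ has constant part $2$, the diagonal coefficient of $\hat w_1^{k-1}$ is $1-\tfrac14(2+2)+O(e^{-2h_2 s/c}) = O(e^{-2h_2 s/c})$, again with rate $\geq 2h_{\min}/c$.

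Having established that each entry of $M(s)$ is an absolutely convergent series $\sum_n a_n e^{-\alpha_n s}$ with every $\alpha_n \geq 2h_{\min}/c$, I would invert term by term, since $e^{-\alpha_n s}$ corresponds to a delay by $\alpha_n$; thus each entry acts in time as a sum of delays by amounts $\geq 2h_{\min}/c$. Consequently $M(s)^k$ has entries that are series in $e^{-\alpha s}$ with $\alpha \geq 2kh_{\min}/c$, so $\hat{\mathbf{w}}^k = M(s)^k\hat{\mathbf{w}}^0$ gives interface errors $w_i^k(t)$ that are convolutions of kernels supported on $[2kh_{\min}/c,\infty)$ with the initial errors $w_j^0$. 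On any finite window only finitely many delay terms contribute, so no convergence question arises. Hence $w_i^k(t)=0$ for $t<2kh_{\min}/c$, and in particular $w_i^k\equiv 0$ on $(0,T)$ once $T\leq 2kh_{\min}/c$, i.e. $T/k\leq 2h_{\min}/c$. The Dirichlet solve \eqref{NNwavemult1} at iteration $k+1$ then has vanishing interface and homogeneous interior data, so $u_i^{k+1}\equiv 0$, which is convergence in $k+1$ iterations.

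The main obstacle is the bookkeeping in the second step: for every one of the coefficient families in \eqref{eq:5.9}--\eqref{eq:5.11} one must verify both that the $s$-independent part cancels and that the surviving leading exponential has rate at least $2h_{\min}/c$ (not merely positive), since this exact rate is what produces the sharp count $k+1$. The cancellation is delicate precisely because it is an algebraic consequence of $\theta=1/4$ and fails for a generic relaxation parameter; tracking which subdomain widths $h_i$ appear in the leading exponential of each coefficient, and confirming that the minimum over all of them is $2h_{\min}/c$, is where the care is required.
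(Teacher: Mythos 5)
Your proposal is correct and takes essentially the same route as the paper's proof: Laplace transform of the interface recurrences, the $\theta=1/4$ cancellation of the constant parts of the hyperbolic coefficients so that every surviving term carries a delay factor $e^{-\alpha s}$ with $\alpha\geq 2h_{\min}/c$, composition over $k$ iterations, and inversion via the shifting property of the Laplace transform to conclude $w_i^k\equiv 0$ on $(0,T)$ once $T\leq 2kh_{\min}/c$, with one further Dirichlet solve giving convergence. Your matrix power $M(s)^k$ is merely a repackaging of the paper's induction with the homogeneous polynomials $p_{i+j}^{n}$ in the coefficients $\hat{t}_{i,j}$, and your expansions of $\coth$, $\tanh$ and $\cosech$ reproduce the paper's series for $\hat{t}_{i,i}$, $\hat{t}_{i,i\pm 1}$, $\hat{t}_{i,i\pm 2}$ and the boundary terms $\hat{t}_{1,1}$, $\hat{t}_{N-1,N-1}$.
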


\begin{proof}
With $\theta=1/4$ the updating condition \eqref{eq:5.9} becomes
\begin{multline}\label{NNmultiupdate1}
\hat{w}_{i}^{k}(s)=-\frac{1}{4}\left(\hat{t}_{i,i}\hat{w}_{i}^{k-1}(s)+\hat{t}_{i,i+1}\hat{w}_{i+1}^{k-1}(s)+\hat{t}_{i,i-1}\hat{w}_{i-1}^{k-1}(s)\right.\\
\left.- \hat{t}_{i,i+2}\hat{w}_{i+2}^{k-1}(s)-\hat{t}_{i,i-2}\hat{w}_{i-2}^{k-1}(s)\right),
\end{multline}
where we defined $\hat{t}_{i,i}:=\frac{2}{\sigma_{i}\sigma_{i+1}}(\gamma_{i}\gamma_{i+1}-\sigma_{i}\sigma_{i+1})$,
$\hat{t}_{i,i+1}:=\frac{(\sigma_{i}\gamma_{i+2}-\gamma_{i}\sigma_{i+2})}{\sigma_{i}\sigma_{i+1}\sigma_{i+2}}$,
$\hat{t}_{i,i-1}:=\frac{(\sigma_{i+1}\gamma_{i-1}-\gamma_{i+1}\sigma_{i-1})}{\sigma_{i}\sigma_{i-1}\sigma_{i+1}}$,
$\hat{t}_{i,i+2}:=\frac{1}{\sigma_{i+1}\sigma_{i+2}}$, and $\hat{t}_{i,i-2}:=\frac{1}{\sigma_{i}\sigma_{i-1}}$.
Similarly, we obtain for \eqref{eq:5.10} 
\begin{equation}
\hat{w}_{1}^{k}(s)=-\frac{1}{4}\left(\hat{t}_{1,1}\hat{w}_{1}^{k-1}(s)+\hat{t}_{1,2}\hat{w}_{2}^{k-1}(s)-\hat{t}_{1,3}\hat{w}_{3}^{k-1}(s)\right),\label{NNmultiupdate2}
\end{equation}
where we defined $\hat{t}_{1,1}:=\left(\frac{\sigma_{1}\gamma_{2}}{\gamma_{1}\sigma_{2}}+\frac{\gamma_{1}\gamma_{2}}{\sigma_{1}\sigma_{2}}-2\right)$,
$\hat{t}_{1,2}=\frac{1}{\sigma_{2}}\left(\frac{\gamma_{3}}{\sigma_{3}}-\frac{\sigma_{1}}{\gamma_{1}}\right)$
and $\hat{t}_{1,3}=\frac{1}{\sigma_{2}\sigma_{3}}$. From \eqref{eq:5.11},
we obtain 
\begin{equation}
\hat{w}_{N-1}^{k}(s)=-\frac{1}{4}\left(\hat{t}_{N-1,N-1}\hat{w}_{N-1}^{k-1}(s)+\hat{t}_{N-1,N-2}\hat{w}_{N-2}^{k-1}(s)-\hat{t}_{N-1,N-3}\hat{w}_{N-3}^{k-1}(s)\right),\label{NNmultiupdate3}
\end{equation}
where we defined $\hat{t}_{N-1,N-1}=\left(\frac{\sigma_{N-1}\gamma_{N-2}}{\gamma_{N-1}\sigma_{N-2}}+\frac{\gamma_{N-1}\gamma_{N-2}}{\sigma_{N-1}\sigma_{N-2}}-2\right)$, $\hat{t}_{N-1,N-3}=\frac{1}{\sigma_{N-2}\sigma_{N-3}}$
and $\hat{t}_{N-1,N-2}=\frac{1}{\sigma_{N-2}}\left(\frac{\gamma_{N-3}}{\sigma_{N-3}}-\frac{\sigma_{N-1}}{\gamma_{N-1}}\right)$. Note
that $\hat{t}_{i,i+2}=\hat{t}_{i+2,i},\hat{t}_{i,i+1}=-\hat{t}_{i+1,i}$.
So by induction on \eqref{NNmultiupdate1}-\eqref{NNmultiupdate2}
we can write 
\begin{equation}\label{NNupinduct1}
\arraycolsep0.00008em
\begin{array}{rcl}
\hat{w}_{i}^{k}(s)\!\!=\!\!\displaystyle{\sum_{j=-2n}^{2n}}&\left(-\frac{1}{4}\right)^{n}p_{i+j}^{n}\!\left(\hat{t}_{i+j,i+j-2},\hat{t}_{i+j,i+j-1},\!\ldots,\!\hat{t}_{i,i},\!\ldots,\!\hat{t}_{i+j,i+j+1},\hat{t}_{i+j,i+j+2}\right)\hat{w}_{i+j}^{k-n}(s)&,
\end{array}
\end{equation}
and 
\begin{equation}
\hat{w}_{1}^{k}(s)=\sum_{j=0}^{2n}\left(-\frac{1}{4}\right)^{n}p_{1+j}^{n}\left(\hat{t}_{1,1},\ldots,\hat{t}_{1+j,2+j},\hat{t}_{1+j,3+j}\right)\,\hat{w}_{1+j}^{k-n}(s),\label{NNupinduct2}
\end{equation}
where the coefficients $p_{i+j}^{n}$ are homogeneous polynomials
of degree $n$. A similar expression holds for $\hat{w}_{N-1}^{k}(s)$.
Now expanding hyperbolic functions into
infinite binomial series, we obtain
\begin{multline*}
\hat{t}_{i,i}=\frac{2\cosh\left((h_{i}-h_{i+1})s/c\right)}{\sinh(h_{i}s/c)\sinh(h_{i+1}s/c)}=4\left(e^{-2h_{i}s/c}+e^{-2h_{i+1}s/c}\right)\left[1+{\displaystyle \sum_{m=1}^{\infty}}e^{-2h_{i}ms/c}\right.\\
\left.+{\displaystyle \sum_{n=1}^{\infty}}e^{-2h_{i+1}ns/c}+{\displaystyle \sum_{m=1}^{\infty}}{\displaystyle \sum_{n=1}^{\infty}}e^{-2(mh_{i}+nh_{i+1})s/c}\right],
\end{multline*}
 
\begin{multline*}
\hat{t}_{i,i+1}=\frac{\sinh\left((h_{i}-h_{i+2})s/c\right)}{\sinh(h_{i}s/c)\sinh(h_{i+1}s/c)\sinh(h_{i+2}s/c)}=4\left[1+{\displaystyle \sum_{l=1}^{\infty}}e^{-\frac{2lsh_{i}}{c}}+{\displaystyle \sum_{m=1}^{\infty}}e^{-\frac{2msh_{i+1}}{c}}\right.\\
+{\displaystyle \sum_{n=1}^{\infty}}e^{-\frac{2nsh_{i+2}}{c}}+{\displaystyle \sum_{m=1}^{\infty}}{\displaystyle \sum_{n=1}^{\infty}}\left\{ e^{-\frac{2(mh_{i}+nh_{i+1})s}{c}}+e^{-\frac{2(mh_{i+1}+nh_{i+2})s}{c}}+e^{-\frac{2(mh_{i+1}+nh_{i+2})s}{c}}\right\} \\
\left.+{\displaystyle \sum_{l=1}^{\infty}}{\displaystyle \sum_{m=1}^{\infty}}{\displaystyle \sum_{n=1}^{\infty}}e^{-\frac{2(lh_{i}+mh_{i+1}+nh_{i+2})s}{c}}\right]\left(e^{-\frac{(h_{i+1}+2h_{i+2})s}{c}}-e^{-\frac{(h_{i+1}+2h_{i})s}{c}}\right),
\end{multline*}
\begin{multline*}
\hat{t}_{i,i+2}=\frac{1}{\sinh(h_{i+1}s/c)\sinh(h_{i+2}s/c)}=4e^{-(h_{i+1}+h_{i+2})s/c}\left[1+{\displaystyle \sum_{m=1}^{\infty}}e^{-2msh_{i+1}/c}\right.\\
\left.+{\displaystyle \sum_{n=1}^{\infty}}e^{-2nsh_{i+2}/c}+{\displaystyle \sum_{m=1}^{\infty}}{\displaystyle \sum_{n=1}^{\infty}}e^{-2(mh_{i+1}+nh_{i+2})s/c}\right],
\end{multline*}
\begin{multline*}
\hat{t}_{1,1}=\frac{2\cosh\left((2h_{1}-h_{2})s/c\right)}{\sinh(2h_{1}s/c)\sinh(h_{2}s/c)}=4\left(e^{-4h_{1}s/c}+e^{-2h_{2}s/c}\right)\left[1+{\displaystyle \sum_{m=1}^{\infty}}e^{-4msh_{1}/c}\right.\\
\left.+{\displaystyle \sum_{n=1}^{\infty}}e^{-2nsh_{2}/c}+{\displaystyle \sum_{m=1}^{\infty}}{\displaystyle \sum_{n=1}^{\infty}}e^{-2(2mh_{1}+nh_{2})s/c}\right],
\end{multline*}
\begin{multline*}
\hat{t}_{1,2}=\frac{\cosh\left((h_{1}-h_{3})s/c\right)}{\cosh(h_{1}s/c)\sinh(h_{2}s/c)\sinh(h_{3}s/c)}=4\left[1+{\displaystyle \sum_{l=1}^{\infty}}(-1)^{l}e^{-\frac{2lsh_{1}}{c}}+{\displaystyle \sum_{m=1}^{\infty}}e^{-\frac{2msh_{2}}{c}}\right.\\
+{\displaystyle \sum_{n=1}^{\infty}}e^{-\frac{2nsh_{3}}{c}}+{\displaystyle \sum_{m=1}^{\infty}}{\displaystyle \sum_{n=1}^{\infty}}\left\{ (-1)^{m}e^{-\frac{2(mh_{1}+nh_{2})s}{c}}+(-1)^{m}e^{-\frac{2(mh_{1}+nh_{3})s}{c}}+e^{-\frac{2(mh_{2}+nh_{3})s}{c}}\right\} \\
\left.+{\displaystyle \sum_{l=1}^{\infty}}{\displaystyle \sum_{m=1}^{\infty}}{\displaystyle \sum_{n=1}^{\infty}}(-1)^{l}e^{-2(lh_{1}+mh_{2}+nh_{3})s/c}\right]\left(e^{-(2h_{1}+h_{2})s/c}+e^{-(h_{2}+2h_{3})s/c}\right).
\end{multline*}
The argument also holds similarly for the terms $\hat{t}_{i,i-1}$, $\hat{t}_{i,i-2}$, $\hat{t}_{1,3}$, $\hat{t}_{N-1,N-1}$, $\hat{t}_{N-1,N-2}$, $\hat{t}_{N-1,N-3}$.
Now using these expressions we can write \eqref{NNupinduct1}-\eqref{NNupinduct2}
as
\begin{equation}
\hat{w}_{i}^{k}(s)=\left(-1\right)^{k}\left[\left(e^{-2ksh_{i}/c}+e^{-2ksh_{i+1}/c}\right)\hat{w}_{i}^{0}(s)+\sum_{j=-2k}^{2k}q_{i+j}^{k}(s)\,\hat{w}_{i+j}^{0}(s)\right],\label{NNupinduct3}
\end{equation}
and
\begin{equation}
\hat{w}_{1}^{k}(s)=\left(-1\right)^{k}\left[\left(e^{-4h_{1}ks/c}+e^{-2h_{2}ks/c}\right)\hat{w}_{1}^{0}(s)+\sum_{j=0}^{2k}r_{1+j}^{k}(s)\,\hat{w}_{1+j}^{0}(s)\right],\label{NNupinduct4}
\end{equation}
where $q_{i+j}^{k}(s)$ and $r_{1+j}^{k}(s)$ are linear combinations
of terms of the form $e^{-sm}$ with $m\geq2kh_{l}/c$ for some $l\in\left\{ 1,2,\ldots N\right\} $.
A similar expression holds for $\hat{w}_{N-1}^{k}(s)$. We now recall the shifting property of Laplace transform 
\begin{equation}
\mathcal{L}^{-1}\left\{ e^{-\alpha s}\hat{f}(s)\right\} =H(t-\alpha)f(t-\alpha),\label{Lformula}
\end{equation}
where $H(t):=\begin{cases}
1, & t>0,\\
0, & t\leq0.
\end{cases}$ is the Heaviside step function. We use
\eqref{Lformula} to back transform \eqref{NNupinduct3}-\eqref{NNupinduct4}
and obtain
\begin{multline*}
w_{i}^{k}(t)=\left(-1\right)^{k}\left[w_{i}^{0}\left(t-\frac{2kh_{i}}{c}\right)H\left(t-\frac{2kh_{i}}{c}\right)+w_{i}^{0}\left(t-\frac{2kh_{i+1}}{c}\right)H\left(t-\frac{2kh_{i+1}}{c}\right)\right.\\
\left.+\:\textrm{other terms}\right],
\end{multline*}
\begin{multline*}
w_{1}^{k}(t)=\left(-1\right)^{k}\left[w_{1}^{0}\left(t-\frac{4kh_{1}}{c}\right)H\left(t-\frac{4kh_{1}}{c}\right)+w_{1}^{0}\left(t-\frac{2kh_{2}}{c}\right)H\left(t-\frac{2kh_{2}}{c}\right)\right.\\
\left.+\:\textrm{other terms}\right]
\end{multline*}
and a similar expression for $w_{N-1}^{k}(t)$. So for $T\leq2kh_{\min}/c$,
we get $w_{i}^{k}(t)=0$ for all $i$, and the conclusion follows.
\hfill \end{proof}
\begin{remark}\label{Rem1}
The shifting property of Laplace transform \eqref{Lformula} is the reason behind the finite step convergence of the DNWR for a particular value of the parameter $\theta$. 
The right hand side of \eqref{Lformula} becomes identically zero for $t\leq\alpha$, so that for sufficiently small time window length $T$ (e.g., $T\leq\alpha$) the
error becomes zero and leads to convergence in the next iteration.
In Figure \ref{FigDNkernel5} we plot $\mathcal{L}^{-1}\left\{ \hat{f}(s)\right\} $
with $f(t)=\sin(t)$ on the left, and show the effect of time-shifting
on the right.
\begin{figure}
\centering
\includegraphics[width=6cm,height=4cm]{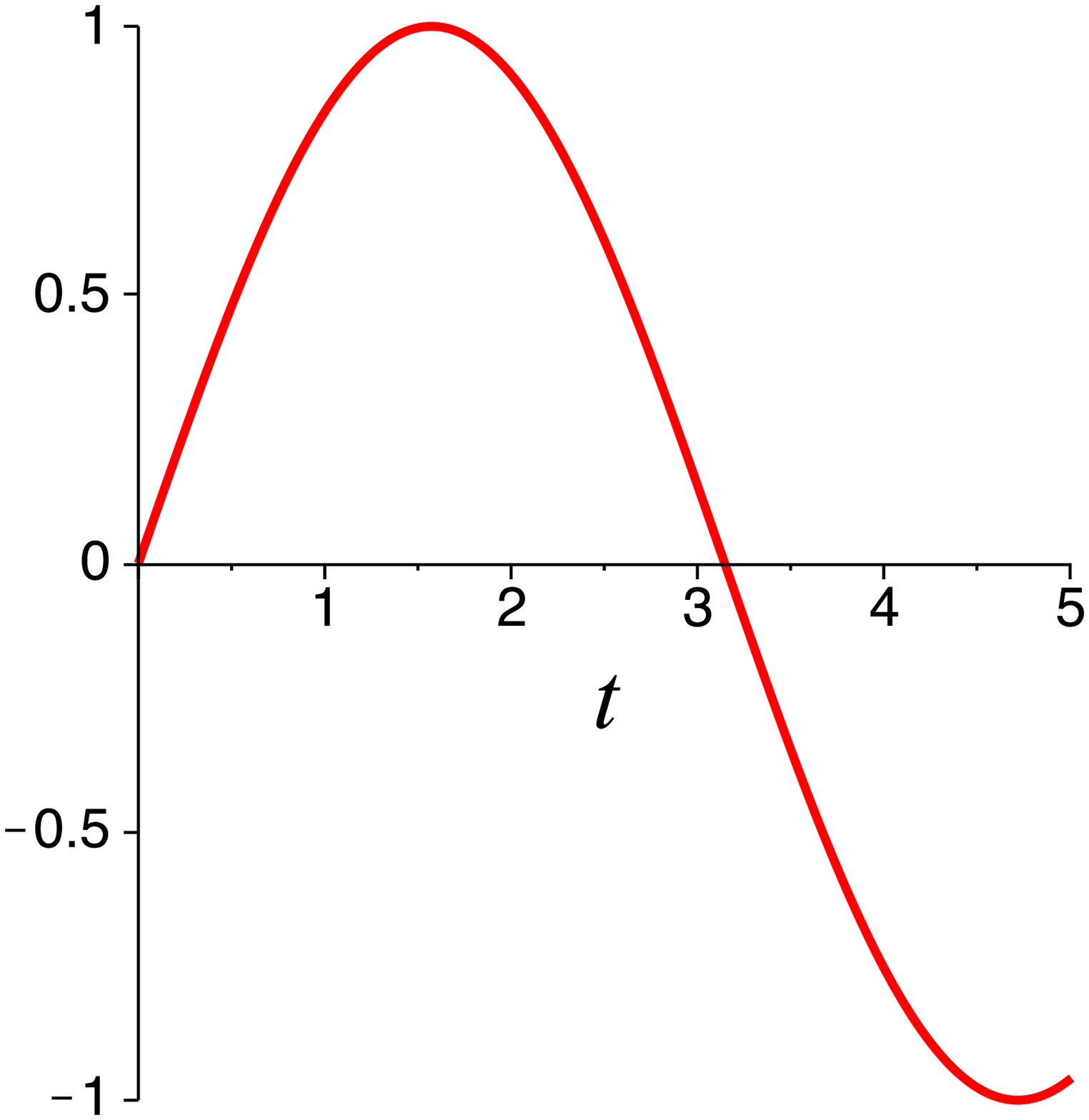}
\includegraphics[width=6cm,height=4cm]{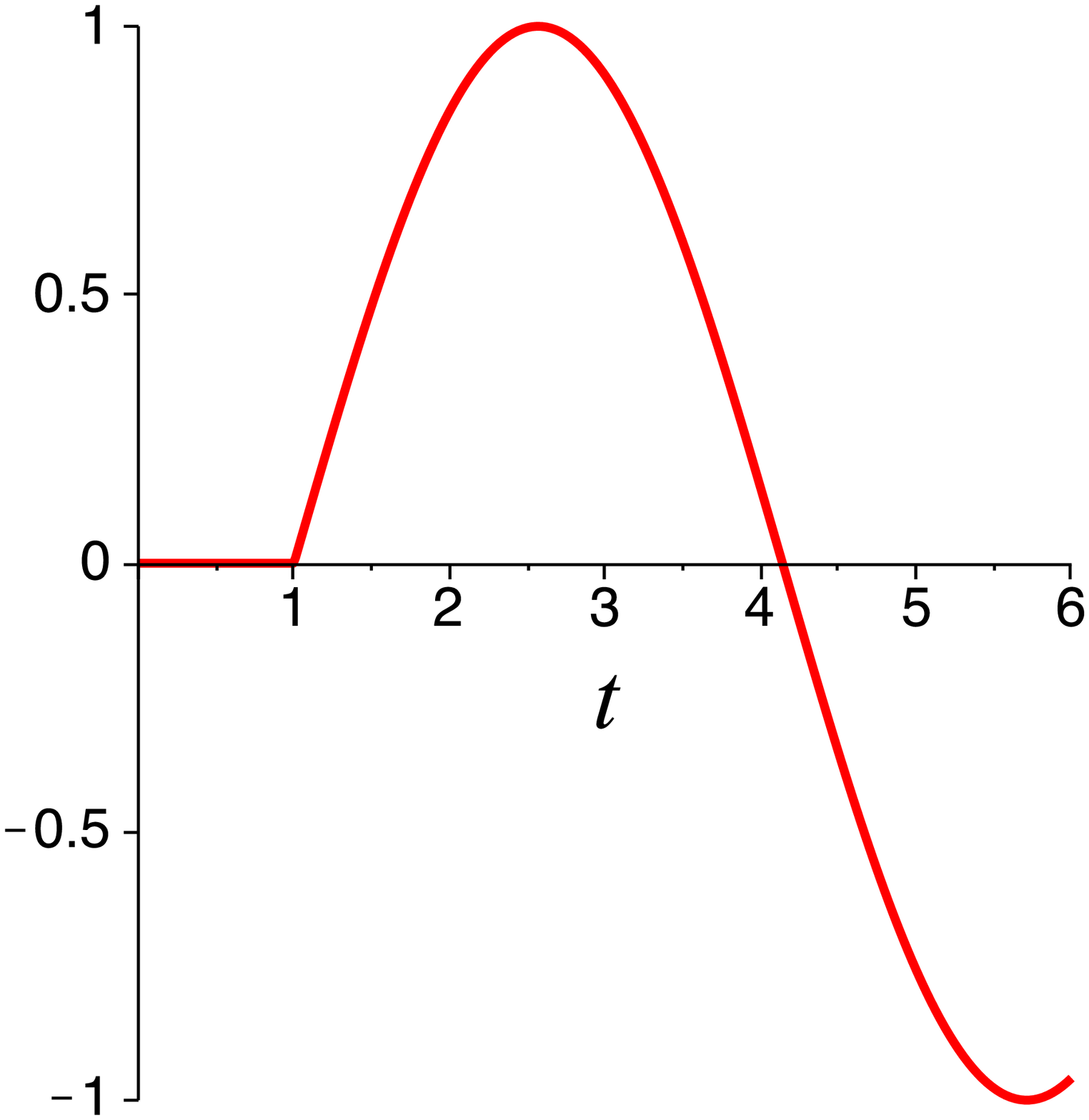}
\caption{Example of time-shifting for the function $f(t)=\sin(t)$: $\mathcal{L}^{-1}\left\{ \hat{f}(s)\right\}$ on the left, and $\mathcal{L}^{-1}\left\{ e^{-s}\hat{f}(s)\right\}$ on the right}
\label{FigDNkernel5}
\end{figure}
\end{remark}

\section{Analysis of NNWR algorithm in 2D}\label{Section3}

We now formulate and analyze the NNWR algorithm for the two-dimensional
wave equation 
\[
\partial_{tt}u-c^{2}\Delta u=f(x,y,t),\quad(x,y)\in\Omega=(l,L)\times(0,\pi),\:t\in(0,T]
\]
with initial condition $u(x,y,0)=u_{0}(x,y),\partial_{t}u(x,y,0)=v_{0}(x,y)$
and Dirichlet boundary conditions. To define the Neumann-Neumann algorithm,
we decompose $\Omega$ into strips of the form $\Omega_{i}=(x_{i-1},x_{i})\times(0,\pi)$,
$l=x_{0}<x_{1}<\cdots<x_{N}=L$. We define the subdomain width $h_{i}:=x_{i}-x_{i-1}$,
and $h_{\min}:=\min_{1\leq i\leq N}h_{i}$. Also we directly consider
the error equations with $f(x,y,t)=0,u_{0}(x,y)=0=v_{0}(x,y)$ and
homogeneous Dirichlet boundary conditions. Given initial guesses $\left\{ w_{i}^{0}(y,t)\right\} _{i=1}^{N-1}$
along the interface $\left\{ x=x_{i}\right\} $, the NNWR algorithm,
as a particular case of \eqref{NNwavemultGen1}-\eqref{NNwavemultGen2}-\eqref{NNwavemultGen3},
is given by performing iteratively for $k=1,2,\ldots$ and for $i=1,\ldots,N$
the Dirichlet and Neumann steps
\begin{equation}\label{NNwave2d}
\arraycolsep0.2em\begin{array}{rclrcl}
\partial_{tt}u_{i}^{k}-c^{2}\Delta u_{i}^{k} & = & 0,\qquad\textrm{in \ensuremath{\Omega_{i}}}, & \partial_{tt}\varphi_{i}^{k}-c^{2}\Delta\varphi_{i}^{k} & = & 0,\qquad\textrm{in \ensuremath{\Omega_{i}}},\\
u_{i}^{k}(x,y,0) & = & 0, & \varphi_{i}^{k}(x,y,0) & = & 0,\\
\partial_{t}u_{i}^{k}(x,y,0) & = & 0, & \partial_{t}\varphi_{i}^{k}(x,y,0) & = & 0,\\
u_{i}^{k}(x_{i-1},y,t) & = & w_{i-1}^{k-1}(y,t), & \partial_{n_{i}}\varphi_{i}^{k}(x_{i-1},y,t) & = & (\partial_{n_{i-1}}u_{i-1}^{k}+\partial_{n_{i}}u_{i}^{k})(x_{i-1},y,t),\\
u_{i}^{k}(x_{i},y,t) & = & w_{i}^{k-1}(y,t), & \partial_{n_{i}}\varphi_{i}^{k}(x_{i},y,t) & = & (\partial_{n_{i-1}}u_{i-1}^{k}+\partial_{n_{i}}u_{i}^{k})(x_{i},y,t),\\
u_{i}^{k}(x,0,t) & = & u_{i}^{k}(x,\pi,t)=0, & \varphi_{i}^{k}(x,0,t) & = & \varphi_{i}^{k}(x,\pi,t)=0,
\end{array}
\end{equation}
except for the first and last subdomain, where in the Neumann step
the Neumann conditions are replaced by homogeneous Dirichlet conditions
along the physical boundaries. The update conditions are defined as
\[
w_{i}^{k}(y,t)=w_{i}^{k-1}(y,t)-\theta\left(\varphi_{i}^{k}(x_{i},y,t)+\varphi_{i+1}^{k}(x_{i},y,t)\right).
\]
We perform a Fourier transform along the $y$ direction to reduce
the original problem into a collection of one-dimensional problems.
Using a Fourier sine series along the $y$-direction, we get 
\[
u_{i}^{k}(x,y,t)=\sum_{n\geq1}U_{i}^{k}(x,n,t)\sin(ny)
\]
 where 
\[
U_{i}^{k}(x,n,t)=\frac{2}{\pi}\int_{0}^{\pi}u_{i}^{k}(x,\eta,t)\sin(n\eta)d\eta.
\]
The NNWR algorithm \eqref{NNwave2d} therefore becomes a sequence
of 1D problems for each $n$,
\begin{equation}
\frac{\partial^{2}U_{i}^{k}}{\partial t^{2}}(x,n,t)-c^{2}\frac{\partial^{2}U_{i}^{k}}{\partial x^{2}}(x,n,t)+c^{2}n^{2}U_{i}^{k}(x,n,t)=0,\label{NNwave2dlap}
\end{equation}
with the boundary conditions for $U_{i}^{k}(x,n,t)$. We now define
\begin{equation}
\chi(\alpha,\beta,t):=\mathcal{L}^{-1}\left\{ \exp\left(-\beta\sqrt{s^{2}+\alpha^{2}}\right)\right\} ,\quad\textrm{Re}(s)>0,\label{Xiab}
\end{equation}
with $s$ being the Laplace variable. Before presenting the main convergence
theorem, we prove the following auxiliary result .
\begin{lemma}\label{Lemma28} We have the identity: 
\[
\chi(\alpha,\beta,t)=\begin{cases}
\delta(t-\beta)-\frac{\alpha\beta}{\sqrt{t^{2}-\beta^{2}}}\,J_{1}\left(\alpha\sqrt{t^{2}-\beta^{2}}\right), & t\geq\beta,\\
0, & 0<t<\beta,
\end{cases}
\]
where $\delta$ is the dirac delta function and $J_{1}$ is the Bessel
function of first order given by 
\[
J_{1}(z)=\frac{1}{\pi}\int_{0}^{\pi}\cos\left(z\sin\varphi-\varphi\right)d\varphi.
\]
 \end{lemma}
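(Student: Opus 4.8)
The plan is to reduce the computation of $\chi(\alpha,\beta,t)$ to a single standard transform pair together with one differentiation in the parameter $\beta$. The starting point is the classical formula
\[
\mathcal{L}^{-1}\!\left\{\frac{e^{-\beta\sqrt{s^{2}+\alpha^{2}}}}{\sqrt{s^{2}+\alpha^{2}}}\right\}=J_{0}\!\left(\alpha\sqrt{t^{2}-\beta^{2}}\right)H(t-\beta),
\]
a tabulated inverse Laplace transform that can itself be obtained from the Bromwich integral or by recognizing the right-hand side as the Riemann function of the one-dimensional Klein--Gordon operator $\partial_{tt}-\partial_{\beta\beta}+\alpha^{2}$. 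I would take this pair as known.

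The key observation is that the exponential we must invert is, up to sign, the $\beta$-derivative of the quotient above: since $\partial_{\beta}\big(e^{-\beta\sqrt{s^{2}+\alpha^{2}}}/\sqrt{s^{2}+\alpha^{2}}\big)=-e^{-\beta\sqrt{s^{2}+\alpha^{2}}}$, we have $\exp(-\beta\sqrt{s^{2}+\alpha^{2}})=-\partial_{\beta}\big(e^{-\beta\sqrt{s^{2}+\alpha^{2}}}/\sqrt{s^{2}+\alpha^{2}}\big)$. Because the Laplace variable $s$ and the parameter $\beta$ are independent, $\mathcal{L}^{-1}$ commutes with $\partial_{\beta}$, so
\[
\chi(\alpha,\beta,t)=-\frac{\partial}{\partial\beta}\Big[J_{0}\!\left(\alpha\sqrt{t^{2}-\beta^{2}}\right)H(t-\beta)\Big].
\]
Carrying out this derivative is then routine. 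The Heaviside factor contributes $-J_{0}(\alpha\sqrt{t^{2}-\beta^{2}})\,\delta(t-\beta)$, and since $J_{0}(0)=1$ the evaluation at $t=\beta$ collapses this to $-\delta(t-\beta)$; the overall minus sign then produces the $+\delta(t-\beta)$ in the statement. Differentiating the Bessel factor by the chain rule, using $J_{0}'=-J_{1}$ and $\partial_{\beta}\sqrt{t^{2}-\beta^{2}}=-\beta/\sqrt{t^{2}-\beta^{2}}$, yields exactly the term $-\frac{\alpha\beta}{\sqrt{t^{2}-\beta^{2}}}J_{1}(\alpha\sqrt{t^{2}-\beta^{2}})$ on $t\geq\beta$, while everything vanishes for $0<t<\beta$ because of the support of $H(t-\beta)$. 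This reproduces the claimed two-branch formula.

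The main subtlety is not the algebra but the legitimacy of differentiating the pair in $\beta$ across the wavefront $t=\beta$, where the $J_{0}$ expression has a jump and its derivative generates the distributional delta; I would make this precise by reading the identity in the sense of distributions (equivalently, differentiating under the Bromwich integral and isolating the boundary contribution at $t=\beta$). A fully self-contained alternative, which avoids invoking the tabulated pair, is to verify the statement directly: insert the claimed right-hand side into $\int_{0}^{\infty}e^{-st}\chi\,dt$, evaluate the delta term as $e^{-s\beta}$, and reduce the remaining Bessel integral by the substitution $t=\sqrt{\beta^{2}+\tau^{2}}$ to a known Laplace integral for $J_{1}$, thereby recovering $\exp(-\beta\sqrt{s^{2}+\alpha^{2}})$. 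I expect the differentiation argument to be the shorter route, with the distributional bookkeeping at the wavefront being the only point requiring genuine care.
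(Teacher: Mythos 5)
Your proof is correct, but it takes a genuinely different route from the paper. The paper's own proof is a pure table-lookup argument: it writes $e^{-\beta r}=e^{-\beta s}-(e^{-\beta s}-e^{-\beta r})$ with $r=\sqrt{s^{2}+\alpha^{2}}$, then cites two entries from Schiff's tables, namely $\mathcal{L}^{-1}\{e^{-\beta s}\}=\delta(t-\beta)$ and $\mathcal{L}^{-1}\{e^{-\beta s}-e^{-\beta r}\}=\frac{\alpha\beta}{\sqrt{t^{2}-\beta^{2}}}J_{1}\bigl(\alpha\sqrt{t^{2}-\beta^{2}}\bigr)$ for $t>\beta$ (zero otherwise), and subtracts. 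You instead start from the more widely known pair $\mathcal{L}^{-1}\bigl\{e^{-\beta r}/r\bigr\}=J_{0}\bigl(\alpha\sqrt{t^{2}-\beta^{2}}\bigr)H(t-\beta)$ and obtain $\chi$ as $-\partial_{\beta}$ of that expression, with the delta emerging from the Heaviside factor and the $J_{1}$ term from the chain rule; your algebra ($J_{0}'=-J_{1}$, $\partial_{\beta}\sqrt{t^{2}-\beta^{2}}=-\beta/\sqrt{t^{2}-\beta^{2}}$, $J_{0}(0)=1$) is right, and there is no integrability issue at the wavefront since $\frac{\alpha\beta}{\sqrt{t^{2}-\beta^{2}}}J_{1}\bigl(\alpha\sqrt{t^{2}-\beta^{2}}\bigr)\to\frac{\alpha^{2}\beta}{2}$ as $t\to\beta^{+}$. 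In effect your argument \emph{derives} the specialized Schiff entry that the paper merely cites: differentiating the forward transform $\int_{\beta}^{\infty}e^{-st}J_{0}\bigl(\alpha\sqrt{t^{2}-\beta^{2}}\bigr)\,dt=e^{-\beta r}/r$ in $\beta$ produces the boundary term $-e^{-\beta s}$ plus the integral of the $J_{1}$ expression, which is precisely $\mathcal{L}\bigl\{\frac{\alpha\beta}{\sqrt{t^{2}-\beta^{2}}}J_{1}\bigr\}=e^{-\beta s}-e^{-\beta r}$; this is also the cleanest way to make your distributional step at $t=\beta$ rigorous, as you anticipate. So your route buys self-containedness (only the classical $J_{0}$ pair is assumed) at the cost of the distributional bookkeeping, while the paper's is shorter but leans on a less common table entry.
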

\begin{proof}
Using the change of variable $r=\sqrt{s^{2}+\alpha^{2}}$ we write
\[
e^{-\beta r}=e^{-\beta s}-(e^{-\beta s}-e^{-\beta r}).
\]
From the table \cite[p.~245]{Schiff} we get 
\begin{equation}
\mathcal{L}^{-1}\left\{ e^{-\beta s}\right\} =\delta(t-\beta),\label{eq:j1}
\end{equation}
Also on page 263 of \cite{Schiff} we find
\begin{equation}
\mathcal{L}^{-1}\left\{ e^{-\beta s}-e^{-\beta r}\right\} =\begin{cases}
\frac{\alpha\beta}{\sqrt{t^{2}-\beta^{2}}}\,J_{1}\left(\alpha\sqrt{t^{2}-\beta^{2}}\right), & t>\beta,\\
0, & 0<t<\beta.
\end{cases}\label{eq:j2}
\end{equation}
Subtracting \eqref{eq:j2} from \eqref{eq:j1} we obtain the expected
inverse Laplace transform.\hfill\end{proof}

\noindent Now we are ready to prove the convergence result for NNWR in 2D:
\begin{theorem}[Convergence of NNWR in 2D]\label{NNwaveTheorem2D} Let $\theta=1/4$.
For $T>0$ fixed, the NNWR algorithm \eqref{NNwave2d} converges in
$k+1$ iterations, if the time window length $T$ satisfies $T/k<2h_{\min}/c$,
$c$ being the wave speed.
\end{theorem}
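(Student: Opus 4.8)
The plan is to mirror the one-dimensional argument of Theorem~\ref{NNwaveMulti}, exploiting the fact that the Fourier sine series has already reduced the problem to the decoupled family of 1D equations \eqref{NNwave2dlap}, one for each frequency $n$. First I would apply the Laplace transform in $t$ to \eqref{NNwave2dlap}, which turns each subdomain equation into the ODE $c^{2}\hat{U}_{i,xx}=(s^{2}+c^{2}n^{2})\hat{U}_{i}$. The only change from the 1D situation is that the quantity $s/c$ is everywhere replaced by $\sqrt{s^{2}+c^{2}n^{2}}/c$. Accordingly I would set $\gamma_{i}=\cosh\!\big(h_{i}\sqrt{s^{2}+c^{2}n^{2}}/c\big)$ and $\sigma_{i}=\sinh\!\big(h_{i}\sqrt{s^{2}+c^{2}n^{2}}/c\big)$; since the identity $\gamma_{i}^{2}-1=\sigma_{i}^{2}$ depends only on $\cosh^{2}-\sinh^{2}=1$ and not on the argument, every algebraic manipulation leading to the recurrences \eqref{eq:5.9}, \eqref{eq:5.10}, \eqref{eq:5.11} and their $\theta=1/4$ forms \eqref{NNmultiupdate1}--\eqref{NNmultiupdate3} carries over verbatim to the transformed interface unknowns $\hat{W}_{i}^{k}(n,s)$.

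Next I would reproduce the induction of \eqref{NNupinduct1}--\eqref{NNupinduct2} together with the binomial expansion of the hyperbolic kernels $\hat{t}_{i,j}$, once more with $s/c$ replaced by $\sqrt{s^{2}+c^{2}n^{2}}/c$. This yields the exact analogue of \eqref{NNupinduct3}: after $k$ steps,
\[
\hat{W}_{i}^{k}(n,s)=(-1)^{k}\Big[\big(e^{-\frac{2kh_{i}}{c}\sqrt{s^{2}+c^{2}n^{2}}}+e^{-\frac{2kh_{i+1}}{c}\sqrt{s^{2}+c^{2}n^{2}}}\big)\hat{W}_{i}^{0}+\sum_{j=-2k}^{2k}Q_{i+j}^{k}(s)\,\hat{W}_{i+j}^{0}\Big],
\]
with similar expressions at the two boundary interfaces, where every exponential that appears (including those hidden inside $Q_{i+j}^{k}$) has the form $e^{-\beta\sqrt{s^{2}+c^{2}n^{2}}}$ with $\beta\geq 2kh_{\min}/c$. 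The critical observation is that $\beta$ multiplies $\sqrt{s^{2}+c^{2}n^{2}}$, so each such term is precisely $\exp\!\big(-\beta\sqrt{s^{2}+\alpha^{2}}\big)$ with $\alpha=cn$, and its inverse Laplace transform is exactly the kernel $\chi(\alpha,\beta,\cdot)$ of \eqref{Xiab}.

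The back-transform is then carried out by the convolution theorem rather than the pure shift \eqref{Lformula}: each term contributes $(\chi(\alpha,\beta,\cdot)\ast W^{0})(t)=\int_{0}^{t}\chi(\alpha,\beta,\tau)\,W^{0}(t-\tau)\,d\tau$. By Lemma~\ref{Lemma28}, $\chi(\alpha,\beta,t)$ vanishes identically for $0<t<\beta$, so every such convolution is zero whenever $t<\beta$, and in particular whenever $t<2kh_{\min}/c$. Hence if $T<2kh_{\min}/c$, i.e. $T/k<2h_{\min}/c$, then every $W_{i}^{k}(n,t)$ vanishes on $(0,T)$ for all $n$; summing the Fourier sine series gives $w_{i}^{k}(y,t)\equiv 0$ on the interfaces, so the next iterate solves the exact subproblems and the algorithm has converged in $k+1$ iterations.

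The main obstacle --- and the reason Lemma~\ref{Lemma28} is established beforehand --- is that in 2D the factor $e^{-\beta\sqrt{s^{2}+c^{2}n^{2}}}$ no longer corresponds to a clean time delay but to convolution with a dispersive kernel carrying a Bessel-function ``wake.'' What must be verified is that this kernel remains causal with support confined to $t\geq\beta$, so that the finite-propagation-speed mechanism underlying the finite-step convergence survives the transverse dispersion. I would also flag the one genuine discrepancy with the 1D case: the Dirac mass $\delta(t-\beta)$ in $\chi$ fires exactly at $t=\beta$, so the endpoint $T=2kh_{\min}/c$ can no longer be admitted, which is precisely why the present statement requires the strict inequality $T/k<2h_{\min}/c$ whereas Theorem~\ref{NNwaveMulti} allows the non-strict one.
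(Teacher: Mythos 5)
Your proposal is correct and follows essentially the same route as the paper: Laplace transform of the Fourier-reduced equations \eqref{NNwave2dlap}, reuse of the 1D recurrences with $s$ replaced by $\sqrt{s^{2}+c^{2}n^{2}}$, inversion via the kernel $\chi$ of \eqref{Xiab} whose causality (support in $t\geq\beta$) is exactly what Lemma~\ref{Lemma28} supplies, and summation of the sine series to conclude. Your closing remark correctly identifies why the 2D statement uses the strict inequality $T/k<2h_{\min}/c$, a point the paper leaves implicit.
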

\begin{proof}
We take Laplace transforms in $t$ of \eqref{NNwave2dlap} to get
\[
(s^{2}+c^{2}n^{2})\hat{U}_{i}^{k}-c^{2}\frac{d^{2}\hat{U}_{i}^{k}}{dx^{2}}=0,
\]
and now treat each $n$ as in the one-dimensional analysis in the
proof of Theorem \ref{NNwaveMulti}, where the recurrence relations
\eqref{NNmultiupdate1}, \eqref{NNmultiupdate2} and \eqref{NNmultiupdate3}
of the form 
\begin{equation}
\hat{w}_{i}^{k}(s)=\sum_{j}A_{ij}^{(k)}(s)\hat{w}_{j}^{0}(s)\label{Update2dNN}
\end{equation}
now become for each $n=1,2,\ldots$
\begin{equation}
\hat{W}_{i}^{k}(n,s)=\sum_{j}A_{ij}^{(k)}\left(\sqrt{s^{2}+c^{2}n^{2}}\right)\hat{W}_{j}^{0}(n,s).\label{NNwlapup}
\end{equation}
The equation \eqref{Update2dNN} is of the form \eqref{NNupinduct3}-\eqref{NNupinduct4},
that means $A_{ij}^{(k)}(s)$ are linear combination of terms of the
form $e^{-\varrho s}$ for $\varrho\geq2kh_{l}/c$ for some $l\in\left\{ 1,2,\ldots N\right\} $.
Therefore the coefficients $A_{ij}^{(k)}\left(\sqrt{s^{2}+c^{2}n^{2}}\right)$
are sum of exponential functions of the form $e^{-\varrho\sqrt{s^{2}+c^{2}n^{2}}}$
for $\varrho\geq2kh_{l}/c$. Hence we use the definition of $\chi$
in \eqref{Xiab} to take the inverse Laplace transform of \eqref{NNwlapup},
and obtain
\[
W_{i}^{k}(n,t)=\sum_{j}\sum_{m}\chi(cn,\varrho_{m,j},t)*W_{j}^{0}(n,t),
\]
with $\varrho_{m,j}\geq2kh_{\min}/c$. So it is straightforward that
for $t<2kh_{\min}/c$, $W_{i}^{k}(n,t)=0$ for each $n$, since the
function $\chi$ is zero there by Lemma \ref{Lemma28}. Therefore
the interface functions $w_{i}^{k}(y,t)$, given by $w_{i}^{k}(y,t)={\displaystyle {\displaystyle \sum_{n\geq1}}}W_{i}^{k}(n,t)\sin(ny)$
are also zero for all $i\in\left\{ 1,\ldots,N-1\right\} $. Hence
one more iteration produces the desired solution on the entire domain.
\hfill \end{proof}

\section{Numerical Experiments}\label{Section4}

We perform numerical experiments to see the convergence behavior of
the NNWR algorithm with multiple subdomains for the model problem
\begin{align}
\partial_{tt}u & =c^2(x)\partial_{xx}u, &  & x\in(0,5),t>0,\nonumber \\
u(x,0) & =0,\:u_{t}(x,0)=0, &  & 0<x<5,\label{Wavenumericmodel}\\
u(0,t) & =t^{2},\:u(5,t)=t^{2}e^{-t}, &  & t>0,\nonumber 
\end{align}
which is discretized using centered finite differences in both space
and time on a grid with $\Delta x=\Delta t=2{\times}10^{-2}$. We consider a decomposition of $(0,5)$ into five unequal subdomains, whose widths $h_{i}$ are $0.6,0.6,0.5,2.3,1$
respectively, and
take the initial guesses $w_{j}^{0}(t)=t^{2},t\in(0,T]$ for $1\leq j\leq4$. Note that in some
of the experiments below, the coefficient $c(x)$ will be spatially varying. This
will allow us to study how spatially varying coefficients affect the performance of the NNWR, which have only been analyzed in the constant coefficient case. For the first experiment, we take the constant speed, $c=1$. On the left panel of Figure \ref{NumFig7}, we show the convergence
for different values of the parameter $\theta$ for $T=8$, and on
the right the results for the best parameter $\theta=1/4$ for different
time window length $T$. We observe two-step convergence
for $\theta=1/4$ for a sufficiently small time window $T$. Now we take the propagation speed, $c(x)=(x+1)/6$ for the second experiment. On the left panel of Figure \ref{NumFig7b}, we show the convergence
for different values of the parameter $\theta$ for $T=8$, and on
the right the results for the best parameter $\theta=1/4$ for different
time window length $T$.
\begin{figure}
  \centering
  \includegraphics[width=0.49\textwidth]{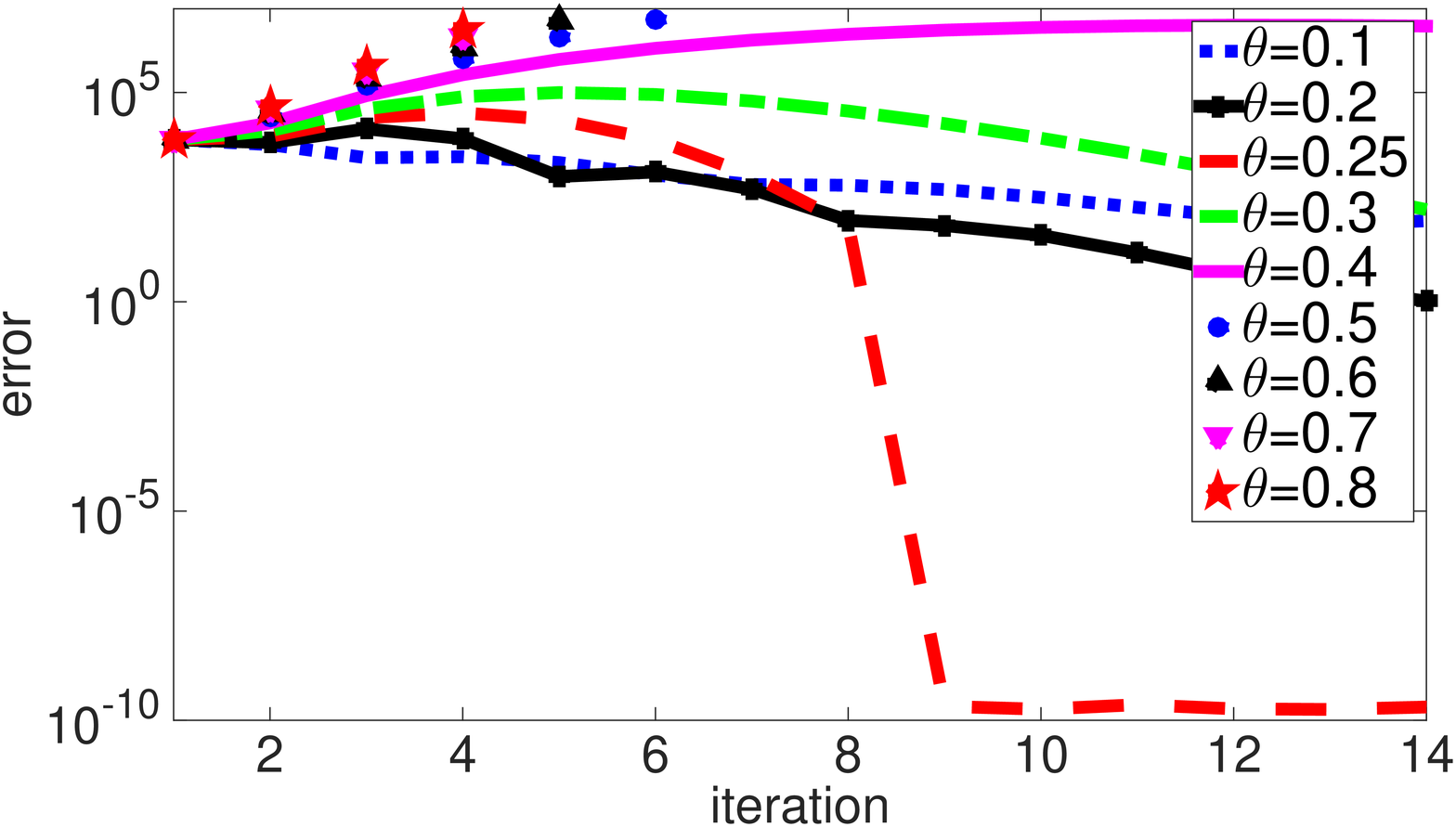}
  \includegraphics[width=0.49\textwidth]{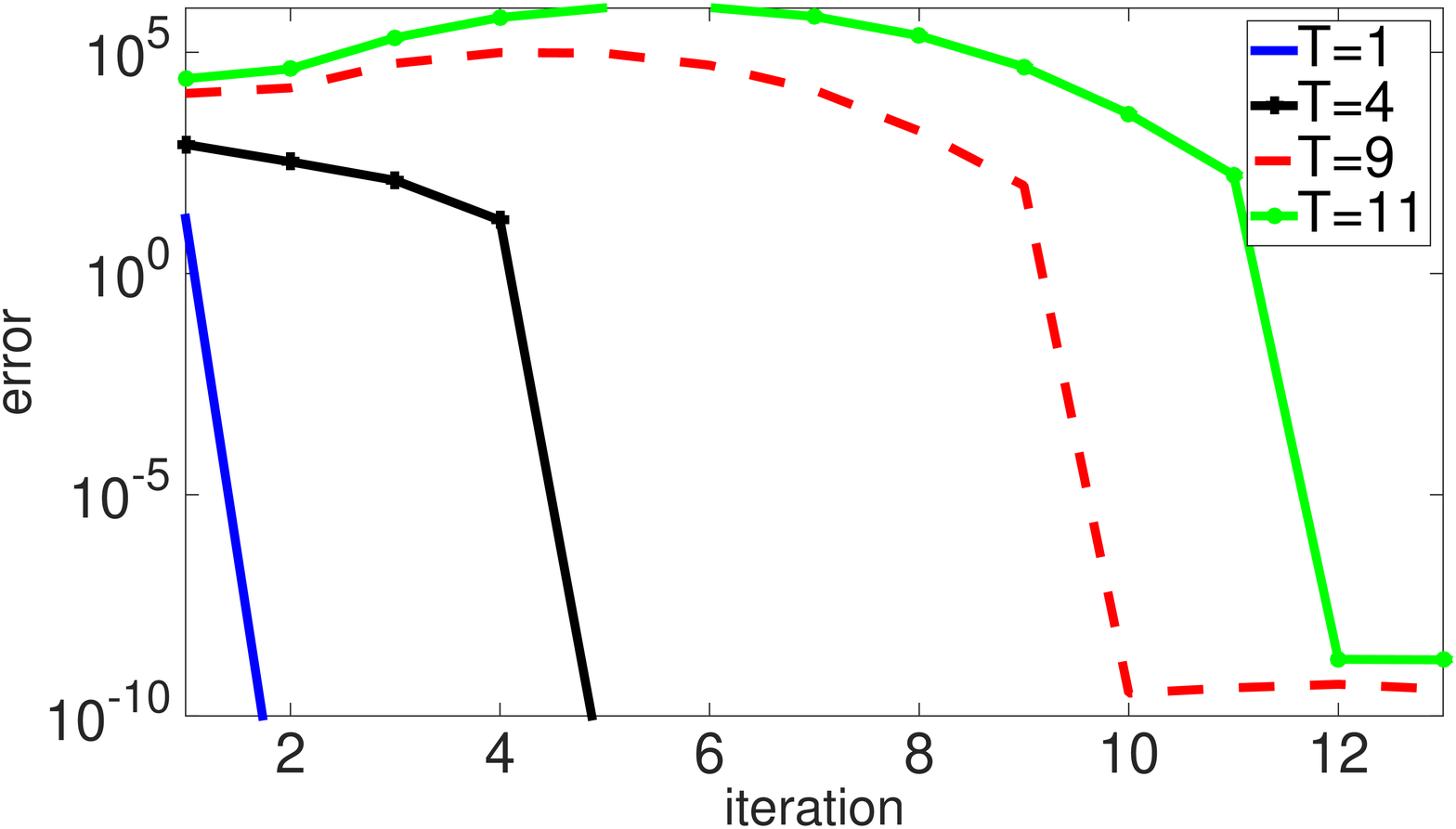}
  \caption{Convergence of NNWR with various values of $\theta$ for $T=8$ on
the left, and for various lengths $T$ of the time window and $\theta=1/4$
on the right}
  \label{NumFig7}
\end{figure} 
\begin{figure}
  \centering
  \includegraphics[width=0.49\textwidth]{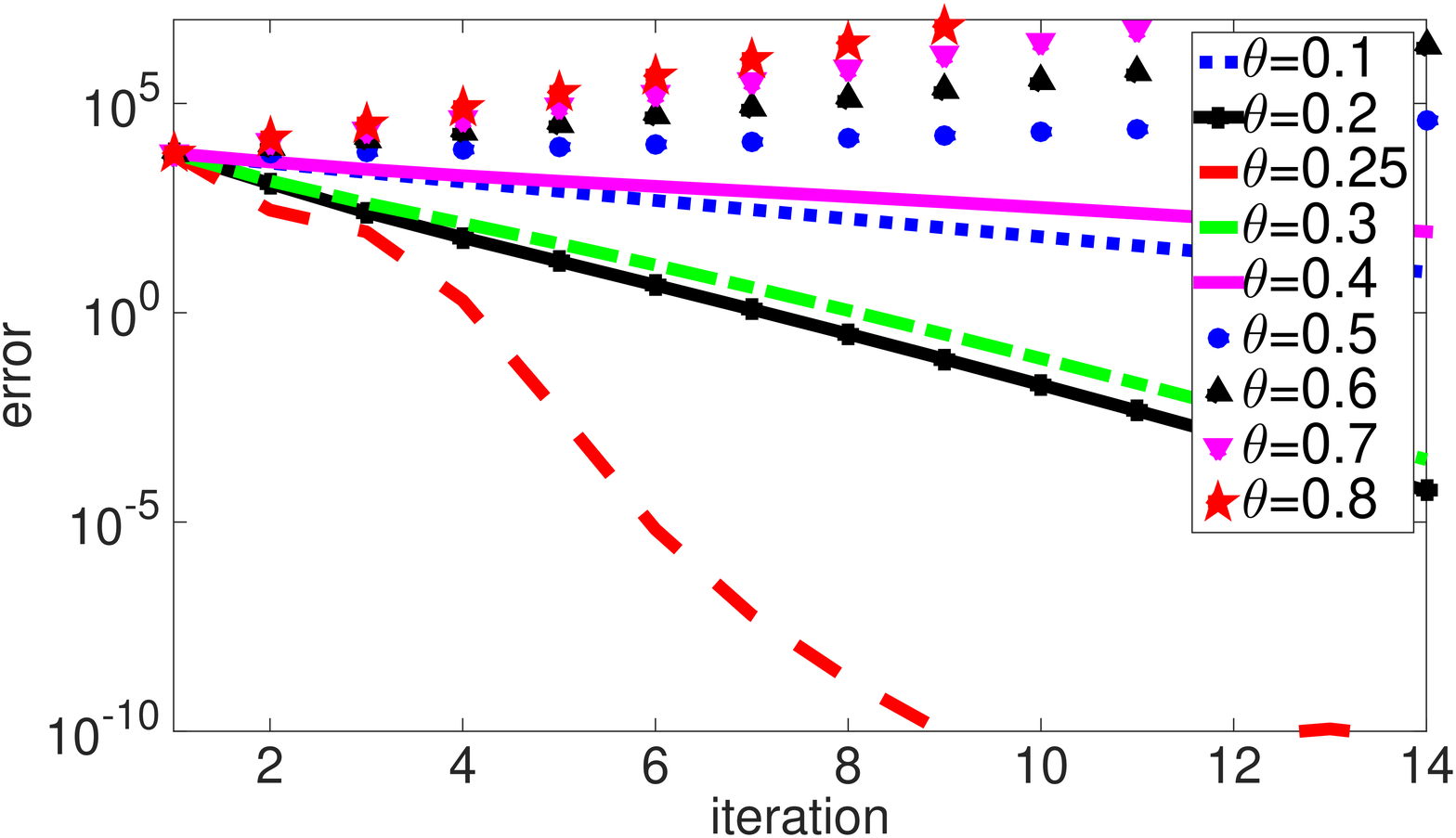}
  \includegraphics[width=0.49\textwidth]{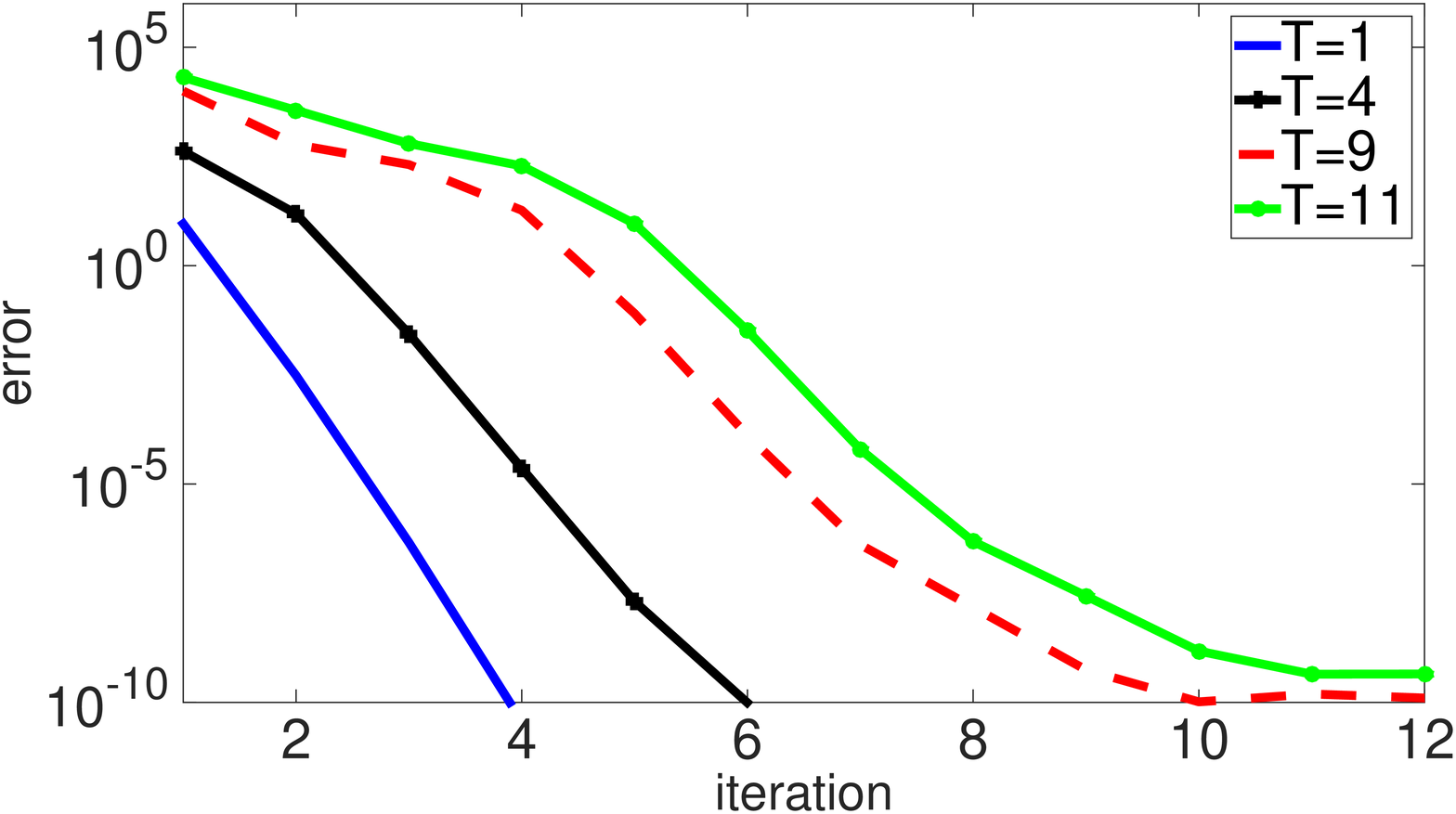}
  \caption{Convergence of NNWR for variable coefficients $c(x)=(x+1)/6$ with various values of $\theta$ for $T=8$ on
the left, and for various lengths $T$ of the time window and $\theta=1/4$
on the right}
  \label{NumFig7b}
\end{figure} 

Next we show an experiment for the NNWR algorithm in two dimension
for the following model problem 
\[
\partial_{tt}u-\left(\partial_{xx}u+\partial_{yy}u\right)=0,u(x,y,0)=xy(x-1)(y-\pi)(5x-2)(4x-3),u_{t}(x,y,0)=0,
\]
with homogeneous Dirichlet boundary conditions. We discretize the
wave equation using the centered finite difference in both space and
time (Leapfrog scheme) on a grid with $\Delta x=5{\times}10^{-2},\Delta y=16{\times}10^{-2},\Delta t=4{\times}10^{-2}$.
We decompose our domain $\Omega:=(0,1)\times(0,\pi)$ into three non-overlapping
subdomains $\Omega_{1}=(0,2/5)\times(0,\pi)$, $\Omega_{2}=(2/5,3/4)\times(0,\pi)$,
$\Omega_{3}=(3/4,1)\times(0,\pi)$. As initial guesses, we take $w_{i}^{0}(y,t)=t\sin(y)$.
In Figure \ref{NumFig8} we plot the convergence curves for different values
of the parameter $\theta$ for $T=2$ on the left panel, and on the
right the results for the best parameter $\theta=1/4$ for different
time window length $T$.
\begin{figure}
  \centering
  \includegraphics[width=0.49\textwidth]{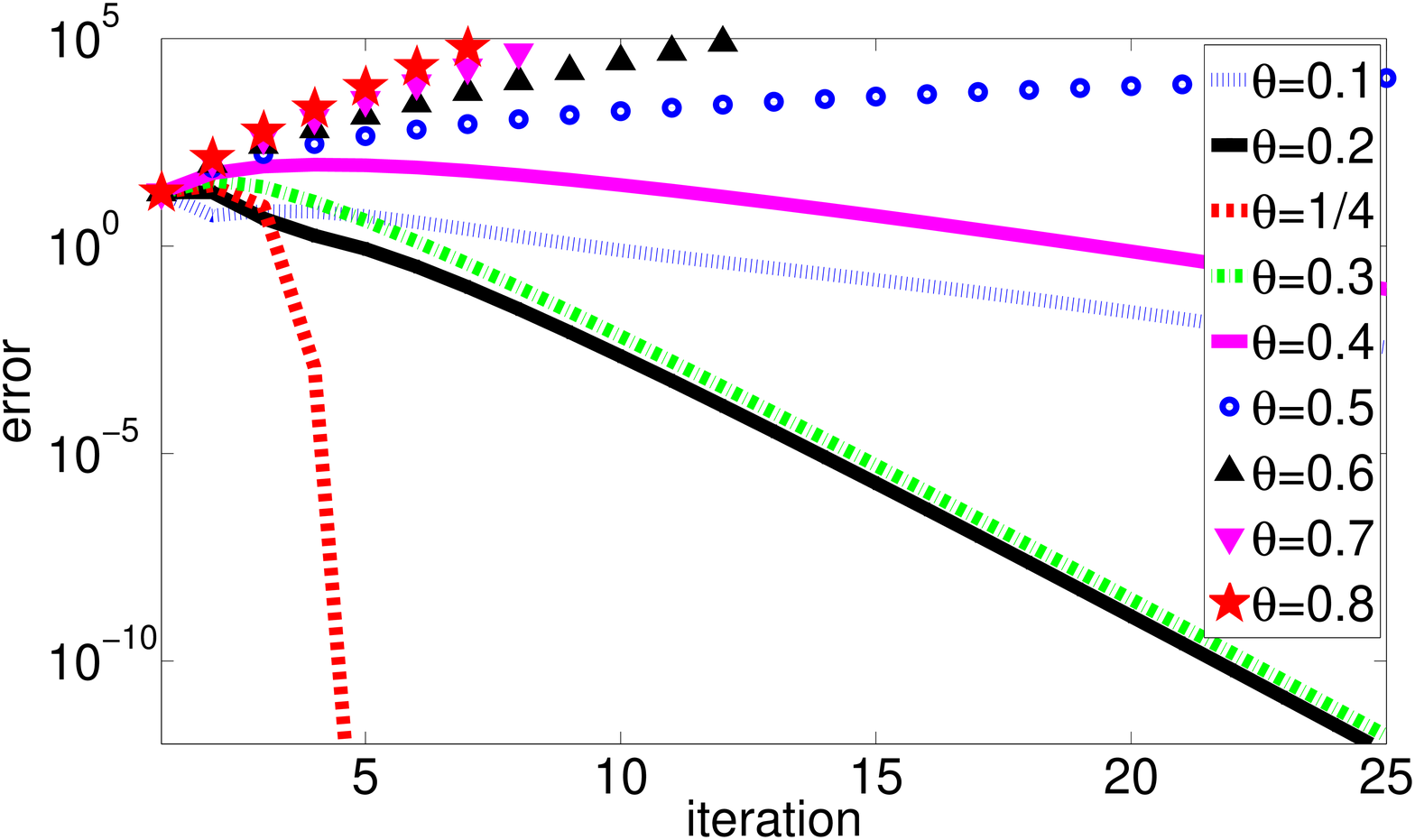}
  \includegraphics[width=0.49\textwidth]{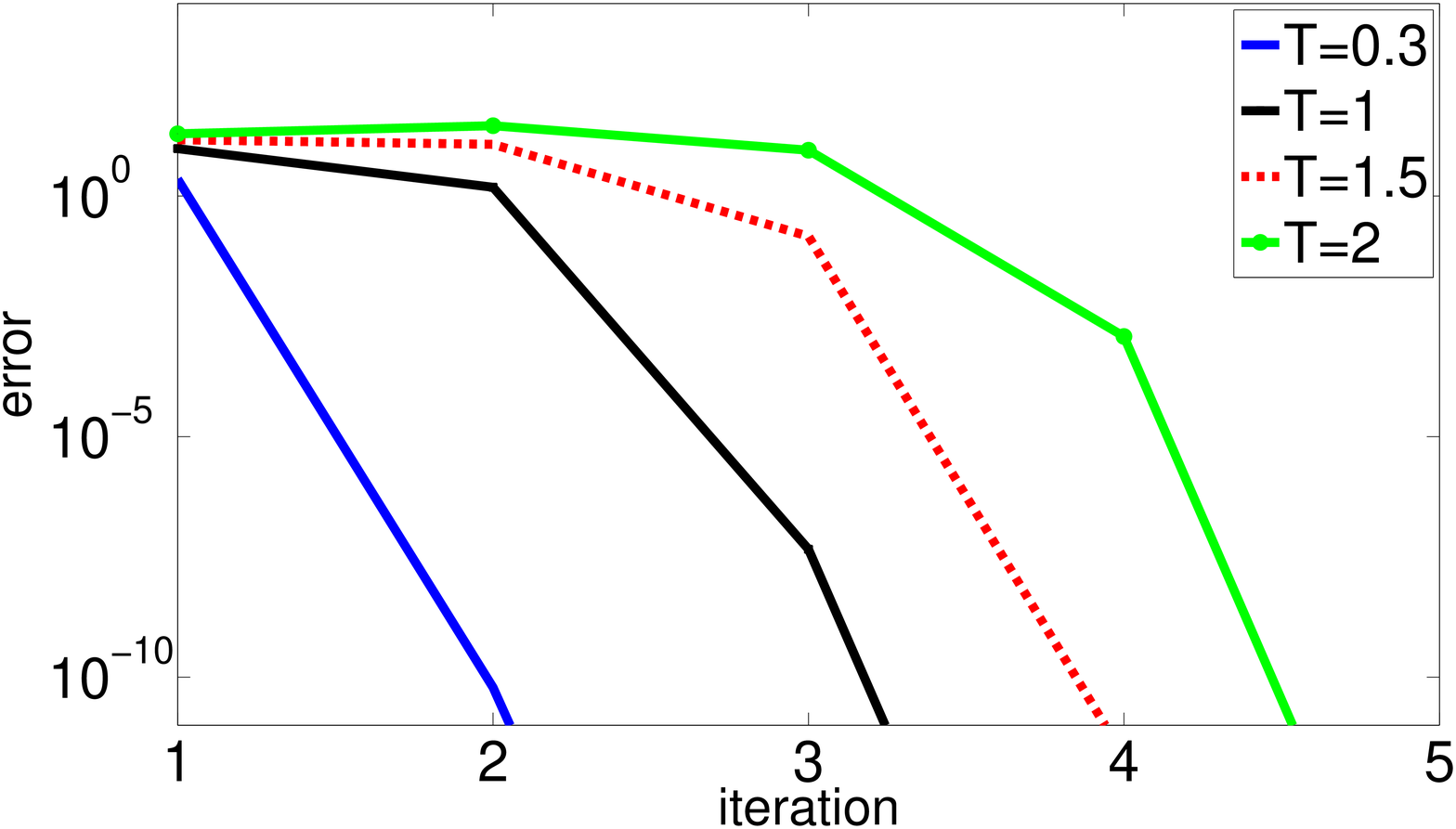}
  \caption{Convergence of NNWR in 2D: curves for different values of $\theta$
for $T=2$ on the left, and for various time lengths $T$ and $\theta=1/4$
on the right}
  \label{NumFig8}
\end{figure}   

We compare in Figure \ref{NumFig9} the performance of the NNWR and DNWR 
(see \cite{GKM3}) algorithms
with the SWR algorithms with and without overlap. Here we consider
the problem 
\begin{align*}
\partial_{tt}u-\partial_{xx}u & =0, &  & x\in(-3,2),t>0,\\
u(x,0) & =0,\:u_{t}(x,0)=xe^{-x}, &  & -3<x<2,\\
u(-3,t) & =-3e^{3}t,\:u(2,t)=2te^{-2}, &  & t>0,
\end{align*}
and for the overlapping Schwarz variant we use an overlap of length
$24\Delta x$, where $\Delta x=1/50$. For the DNWR, NNWR and non-overlapping
SWR we consider a domain decomposition into two subdomains $\Omega_{1}=(-3,0)$
and $\Omega_{2}=(0,2)$. We observe that the DNWR and NNWR algorithms
converge as fast as the Schwarz WR algorithms for smaller time windows
$T$. 
\begin{figure}
  \centering
  \includegraphics[width=0.49\textwidth]{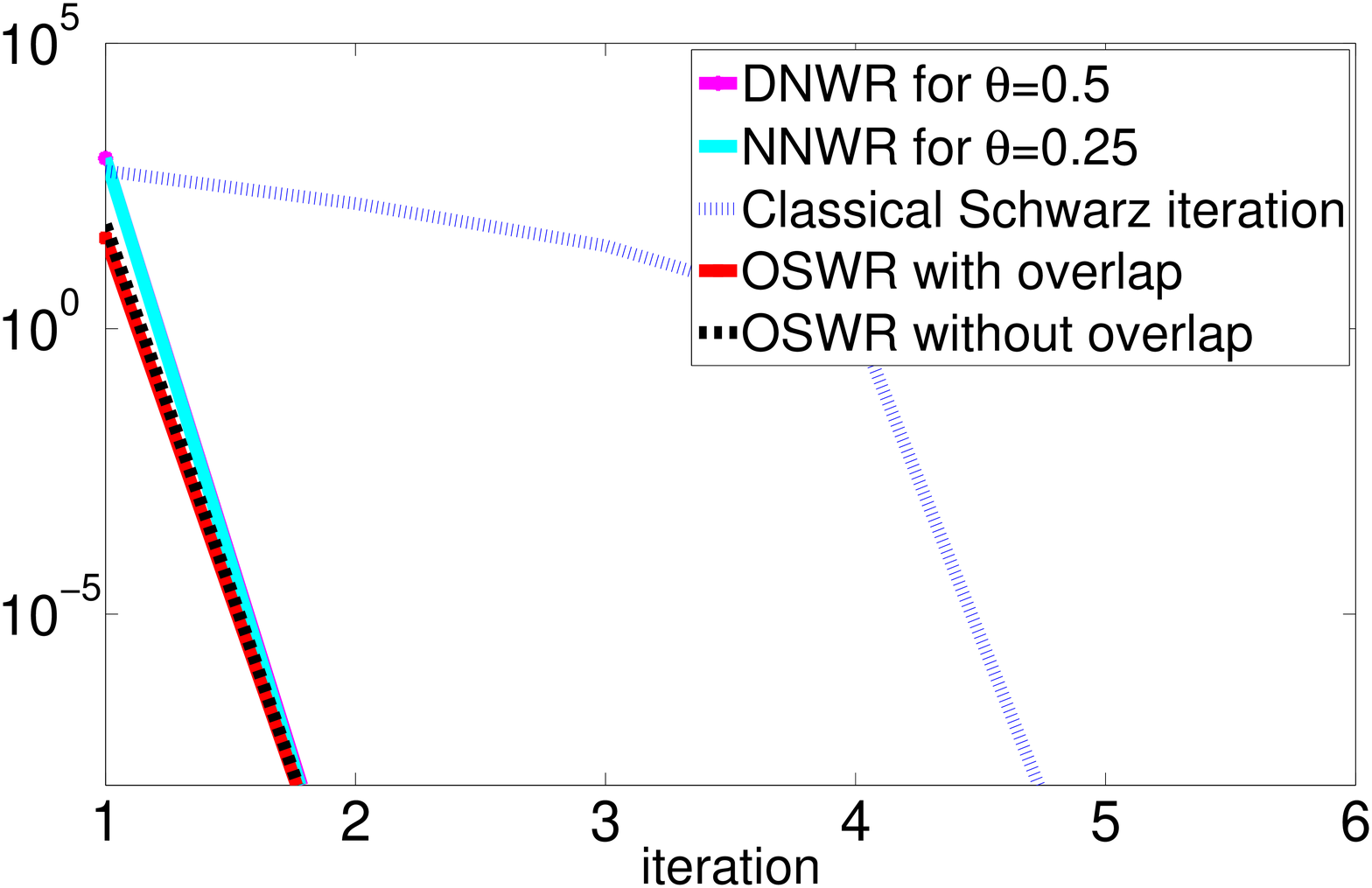}
  \includegraphics[width=0.49\textwidth]{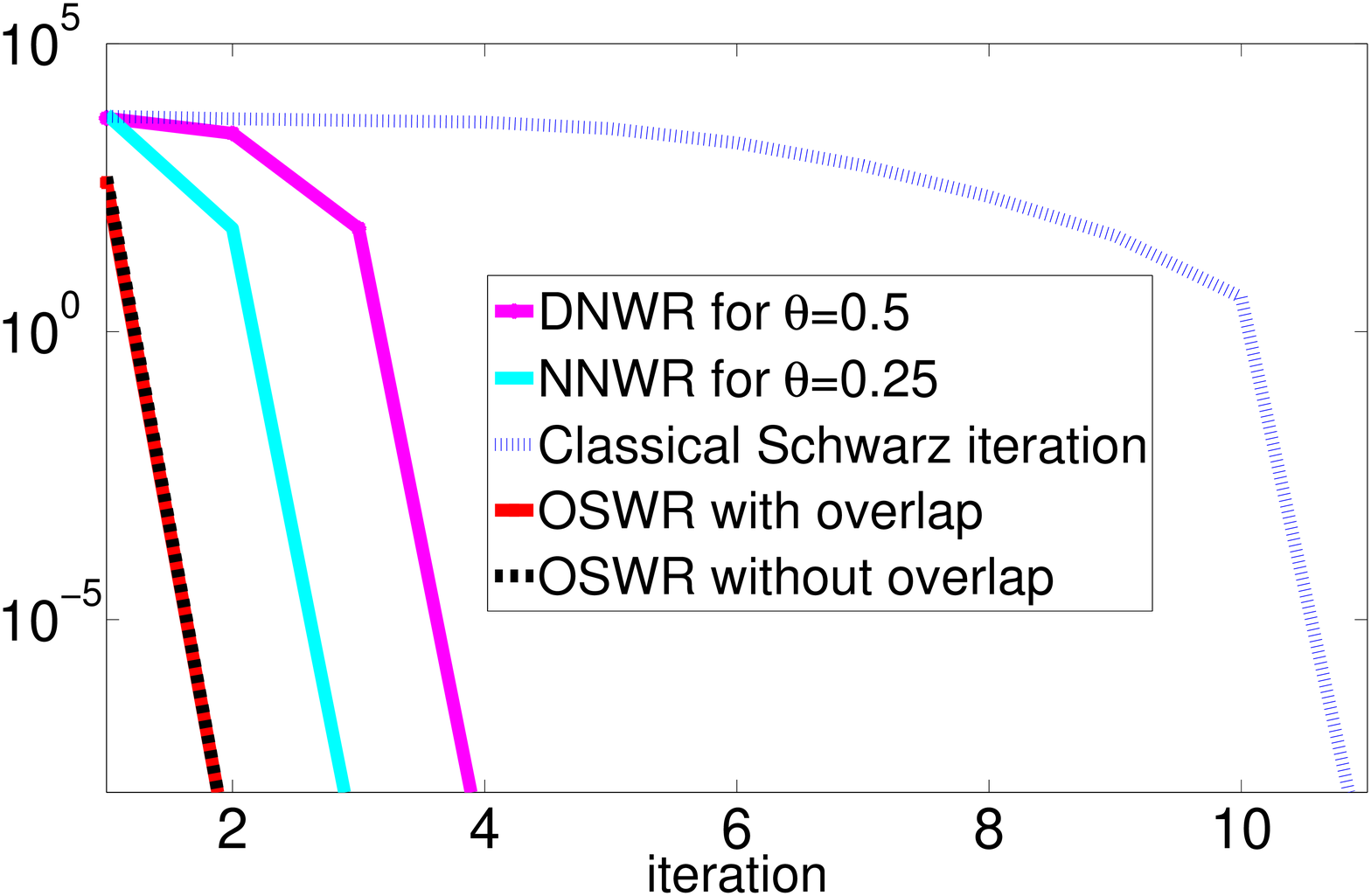}
  \caption{Comparison of DNWR, NNWR, and SWR for 1D wave equation for $T=4$ on the left, and $T=10$ on the right}
  \label{NumFig9}
\end{figure} 
\begin {table}\begin{center} \caption {Comparison of steps needed for convergence for 1D wave equation. \label{TableSWR2}} \begin{tabular}{|c|c|c|c|} \hline Methods & 2 subdomains, 1D & Many subdomains, 1D & Many subdomains, 2D \tabularnewline \hline DNWR & $T\leq 2kh_{\min}/c$ & $T\leq kh_{\min}/c$ & $T< kh_{\min}/c$\tabularnewline \hline NNWR & $T\leq 4kh_{\min}/c$ & $T\leq 2kh_{\min}/c$ & $T< 2kh_{\min}/c$\tabularnewline \hline \end{tabular} \end{center} \end {table} 
Due to the local nature of the Dirichlet-to-Neumann operator in 1D \cite{GHN}, SWR converges in a finite number of iterations just like DNWR and
NNWR. In higher dimensions, however, non-overlapping SWR will no longer
converge in a finite number of steps, but DNWR and NNWR will; see
Figure \ref{NumFig10}. Table \ref{TableSWR2} gives a summary of the theoretical
results from Section 2 and 3 and \cite{GKM3}, to indicate the maximum
number of iterations needed for the 1D and 2D wave equation to converge to
the exact solution. For the comparison result in 2D, we consider the model problem
\[
\partial_{tt}u-\left(\partial_{xx}u+\partial_{yy}u\right)=0,u(x,y,0)=0=u_{t}(x,y,0),
\]
with Dirichlet boundary conditions $u(0,y,t)=t^{2}\sin(y),u(1,y,t)=y(y-\pi)t^{3}$
and $u(x,0,t)=0=u(x,\pi,t)$. We decompose our domain $\Omega:=(0,1)\times(0,\pi)$
for the two subdomains experiment into $\Omega_{1}=(0,3/5)\times(0,\pi)$
and $\Omega_{2}=(3/5,1)\times(0,\pi)$, and for the three subdomains
experiment into $\Omega_{1}=(0,2/5)\times(0,\pi)$, $\Omega_{2}=(2/5,3/4)\times(0,\pi)$,
$\Omega_{3}=(3/4,1)\times(0,\pi)$. We take a random initial guess
to start the iteration, and for the overlapping SWR we use an overlap
of length $2\Delta x$ in all the experiments. We implement first
order methods with one parameter in optimized SWR iterations; for
more details see \cite{GH2}. On the left panel of Figure \ref{NumFig10} we plot
the comparison curves for two subdomains, and the same for three subdomains
on the right. 
\begin{figure}
  \centering
  \includegraphics[width=0.49\textwidth]{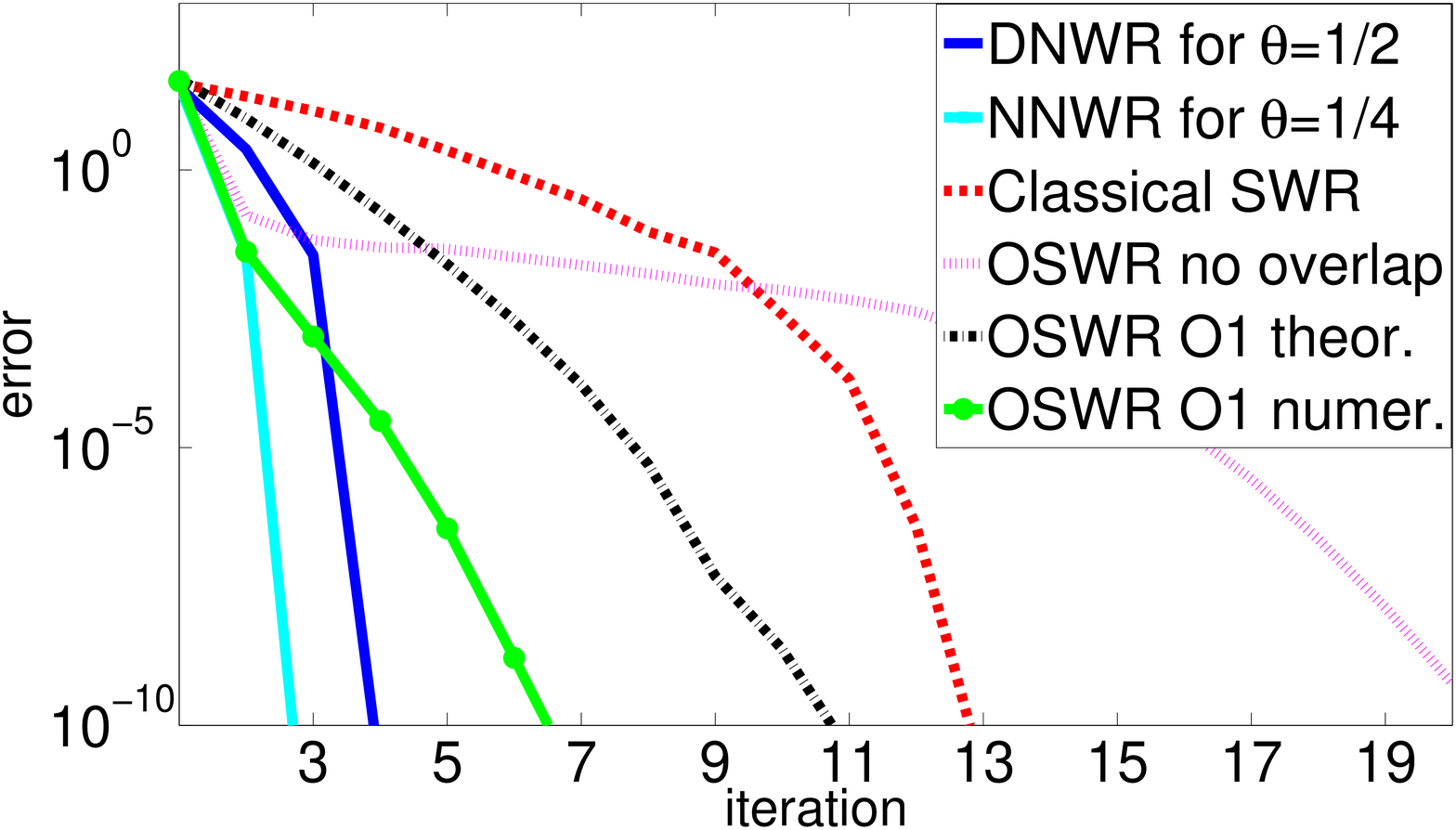}
  \includegraphics[width=0.49\textwidth]{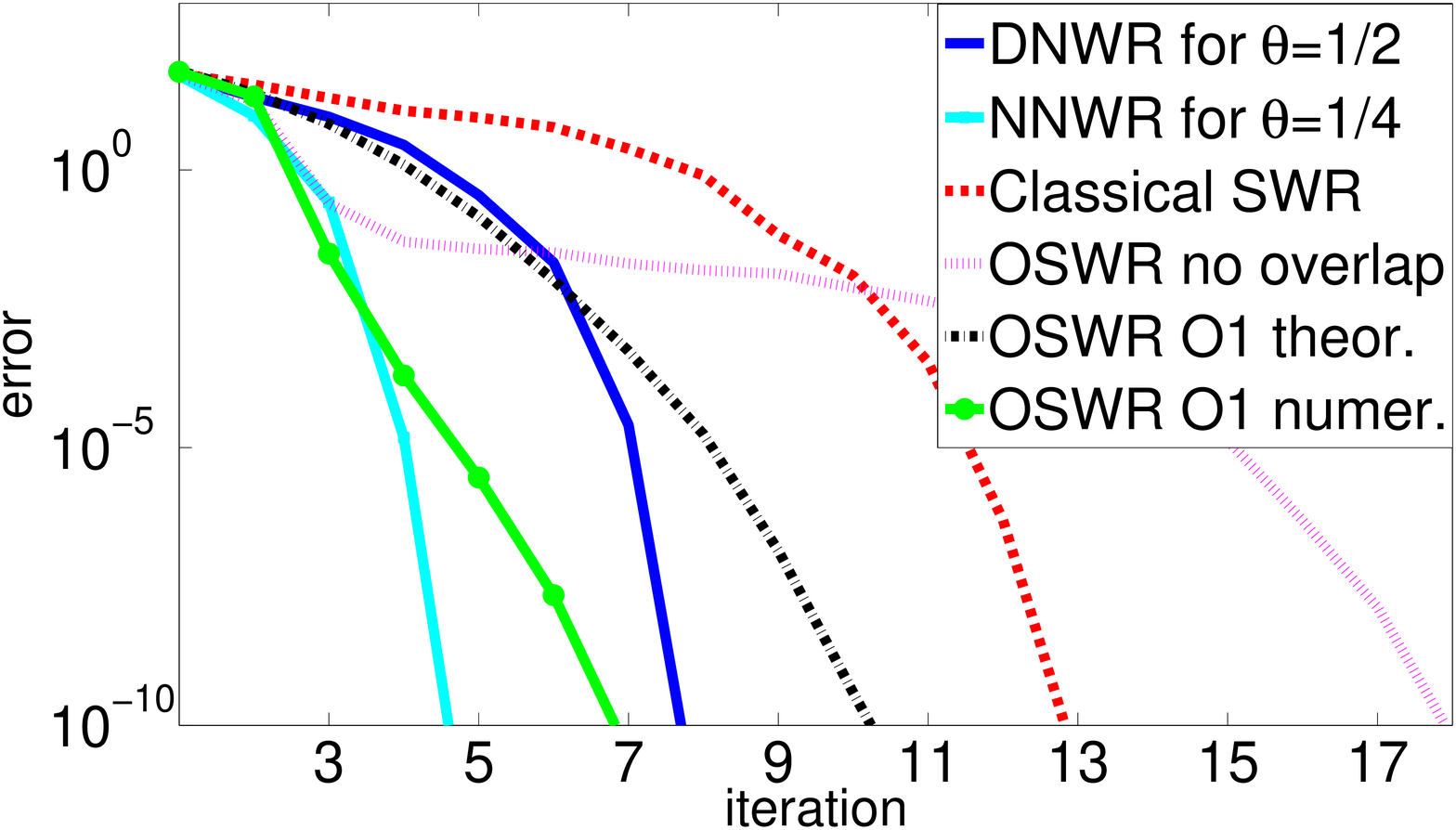}
  \caption{Comparison of DNWR, NNWR, and SWR for $T=2$ in 2D for two subdomains on the left, and three subdomains on the right}
  \label{NumFig10}
\end{figure} 

Now we show a numerical experiment for the NNWR algorithm with different
time grids for different subdomains and discontinuous wave speed across
interfaces. We consider the model problem 
\[
\partial_{tt}u-c^{2}\partial_{xx}u=0,u(x,0)=0=u_{t}(x,0),
\]
with Dirichlet boundary conditions $u(0,t)=t^{2},u(6,t)=t^{3}$. Suppose
the spatial domain $\Omega:=(0,6)$ is decomposed into three equal
subdomains $\Omega_{i},i=1,2,3$, and the random initial guesses are
used to start the NNWR iteration. For the spatial discretization,
we take a uniform mesh with size $\Delta x=1{\times}10^{-1}$, and
for the time discretization, we use non-uniform time grids $\Delta t_{i},i=1,2,3$, as given in Table \ref{Table23}. For the non-uniform mesh grid,
boundary data is transmitted from one subdomain to a neighboring subdomain
by introducing a suitable time projection. For two dimensional problems,
the interface is one dimensional. Using ideas of merge sort one can
compute the projection with linear cost, see \cite{GJap} and the references
therein. Even for higher dimensional interfaces, such an algorithm
with linear complexity is still possible, see \cite{GJap2}. In Figure
\ref{FigDNnonUnif} we show the non-uniform time steps for different
subdomains. Figure \ref{NumFig11} gives the convergence behavior
of the NNWR algorithm for $T=2$ with the same non-uniform time grids
as in Table \ref{Table23}.
\begin {table} \begin{center} \caption {Propagation speed and time steps for different subdomains.}\label{Table23} \begin{tabular}{|c|c|c|c|} \hline & $\Omega_1$ & $\Omega_2$ & $\Omega_3$ \tabularnewline \hline wave speed $c$& $1/4$ & $2$ & $1/2$ \tabularnewline \hline time grids $\Delta t_i$ & $13\times 10^{-2}$ & $39\times 10^{-3}$ & $1\times 10^{-1}$ \tabularnewline \hline \end{tabular} \end{center} \end {table} 
\begin{figure} 
\centering 
\includegraphics[width=0.52\textwidth]{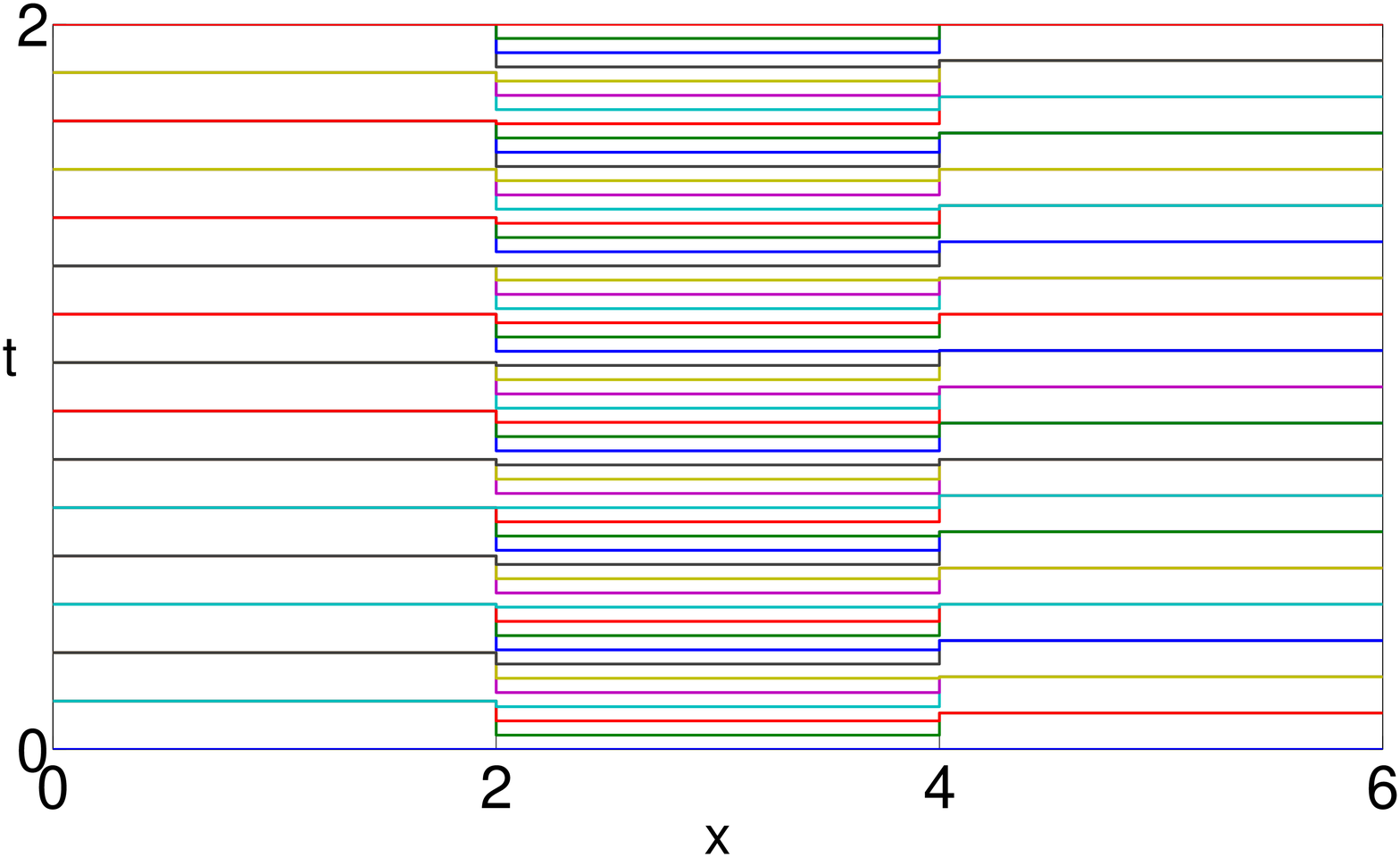} 
\caption{Subdomains with non-uniform time steps} 
\label{FigDNnonUnif} 
\end{figure}
\begin{figure}
  \centering
  \includegraphics[width=0.49\textwidth]{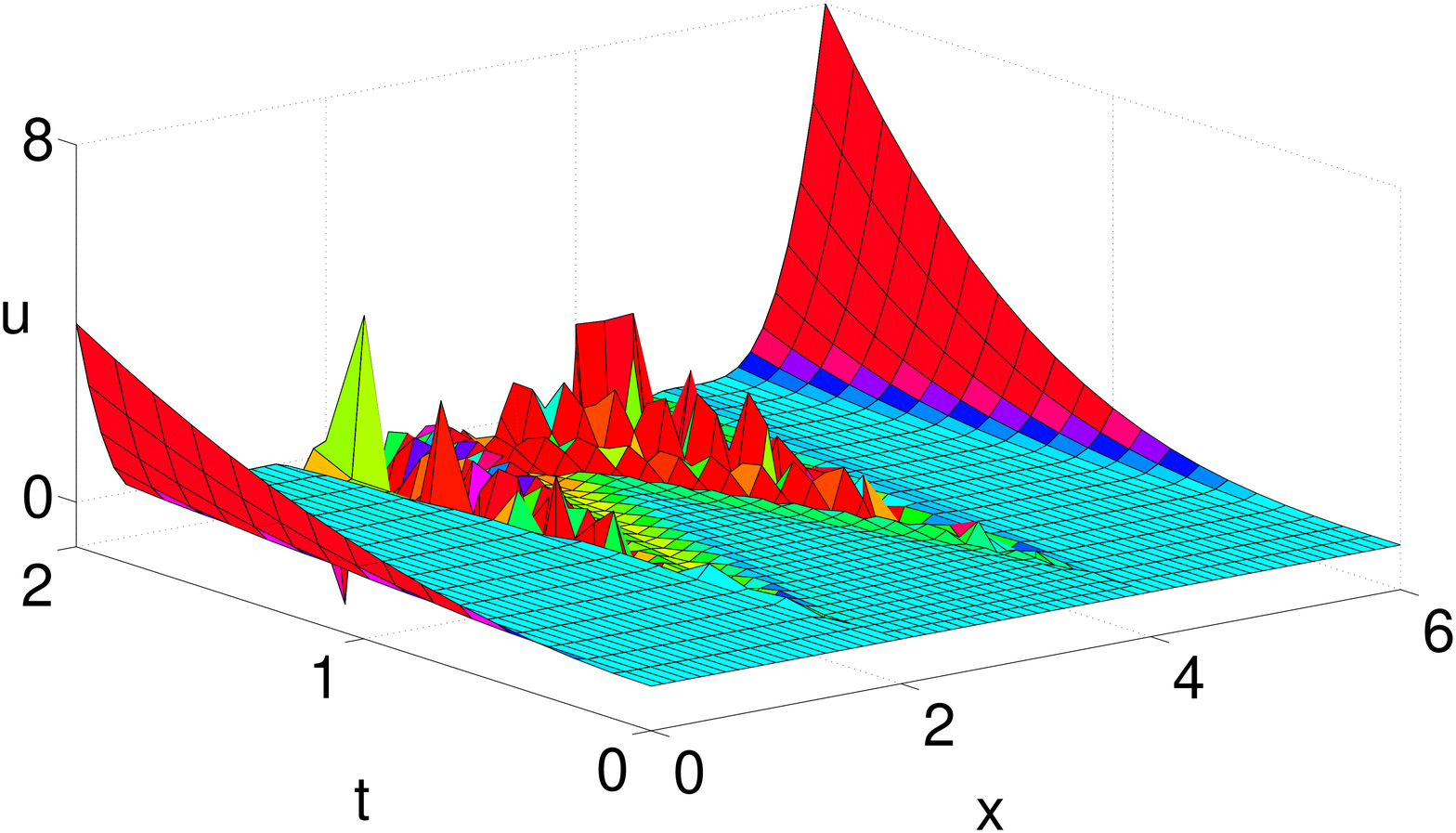}
  \includegraphics[width=0.49\textwidth]{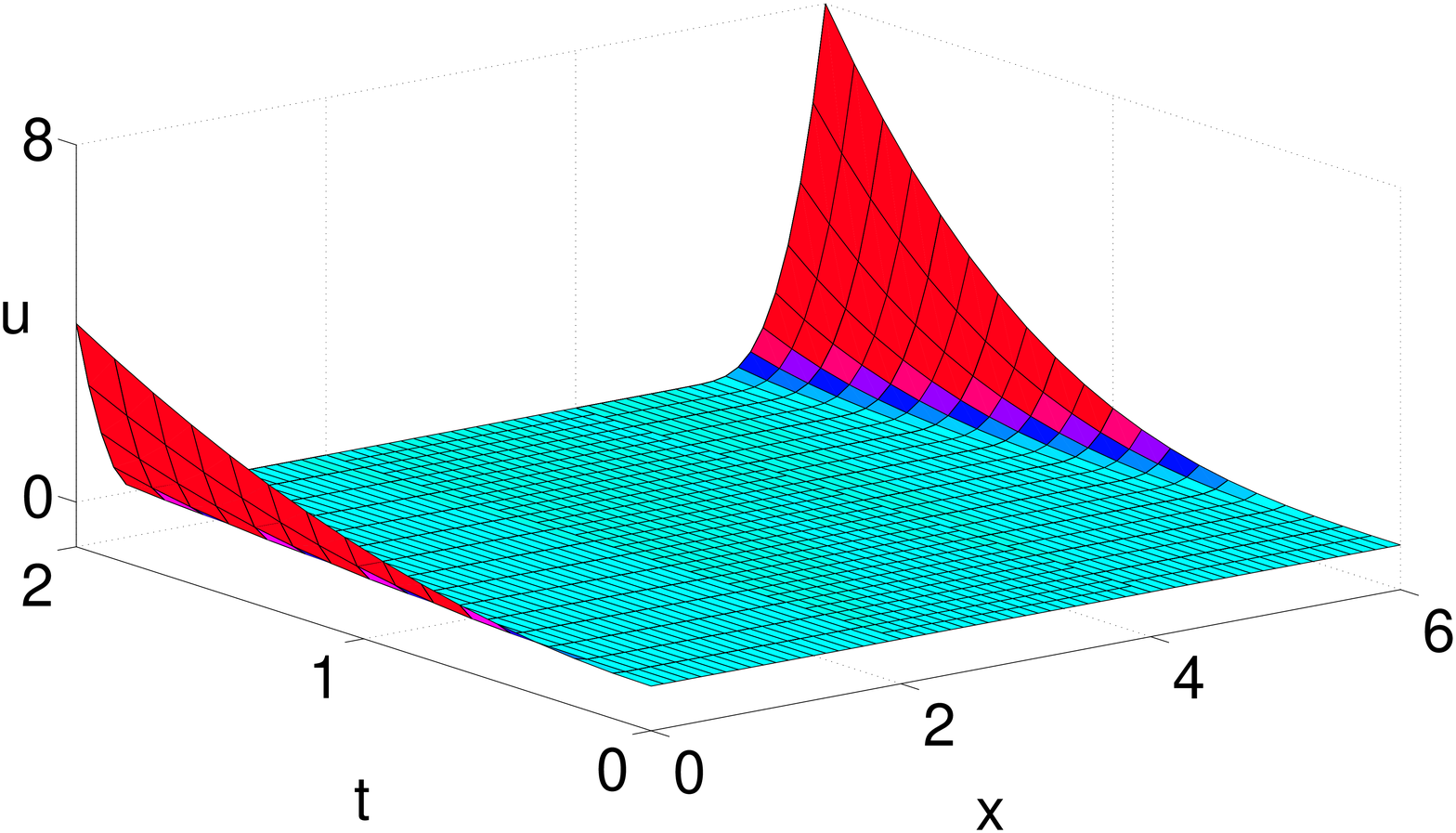}
  \caption{Solution using NNWR method for the wave equation with non-uniform time steps for $\theta=1/4$ for $T=2$: Solution after 1st iteration on the left, and solution after 2nd iteration on the right}
  \label{NumFig11}
\end{figure}

Finally we raise the issue of scalability of the NNWR algorithm by
giving some numerical examples for the wave equation. From Theorem
\ref{NNwaveMulti}, one can say that as long as $h/T$ is constant,
we expect identical convergence behavior of the NNWR algorithm. We
plot in Figure \ref{FigScalable} 
the convergence curves by doubling
the number of subdomains and making the time window length half. One
can therefore conclude that the NNWR algorithm is weakly scalable for the
wave equation.
\begin{figure} 
\centering
\includegraphics[width=0.52\textwidth]{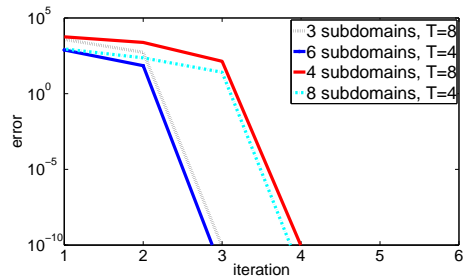}
\caption{Graphs for comparing scalability of NNWR method for the wave equation} \label{FigScalable} 
\end{figure} 

\section{Conclusions }

We defined the NNWR algorithm for multiple subdomains
and for general hyperbolic problems, and analyzed their convergence
properties for the second order wave equation in 1D. We showed
using numerical experiments that for a particular choice of the relaxation
parameter, more specifically for $\theta=1/4$, convergence can be achieved in a finite number of steps.
In fact, this algorithm can be used as a two-step method, choosing
the time window lengh $T$ small enough. We have also extended the
NNWR algorithm for the second order wave equation in 2D, and analyzed
its convergence properties. We have also shown using numerical experiments
that among the DNWR (see \cite{GKM2}) and NNWR methods, NNWR converges faster. But in comparison
to DNWR, the NNWR has to solve twice the number of subproblems (once
for Dirichlet subproblems, and once for Neumann subproblems) on each
subdomain at each iteration. Therefore the computational cost is almost
double for the NNWR than for the DNWR algorithm at each step. However,
we get better convergence behavior with the NNWR in terms of iteration
numbers. Finally we presented a comparison of performences between
the DNWR, NNWR and Schwarz WR methods, and showed that the DNWR and
NNWR converge faster than optimized SWR at least for higher dimensions. 
\afterpage{\clearpage}
\section*{Acknowledgement}
I would like to express my gratitude to Prof. Martin J. Gander and Prof. Felix Kwok for their constant support and stimulating suggestions.

\bibliographystyle{siam}
\bibliography{papersingle}


\end{document}